\title[Profinite commensurability of \textit{S}-arithmetic groups]{Profinite commensurability\\ of \textit{S}-arithmetic groups}
\author{Holger Kammeyer}
\address{Institute for Algebra and Geometry\\ Karlsruhe Institute of Technology\\ Englerstr. 2 (Mathebau 20.30) \\ 76131 Karlsruhe \\ Germany}
\email{holger.kammeyer@kit.edu}
\urladdr{www.math.kit.edu/iag7/~kammeyer/}
\subjclass[2010]{20G30 (Primary), 20G25, 11F75 (Secondary)}
\keywords{S-arithmetic groups, profinite completion, commensurability}
\newtheorem{theorem}{Theorem}
\newtheorem{corollary}{Corollary}
\newtheorem{proposition}{Proposition}
\newtheorem{question}{Question}
\theoremstyle{definition}
\newtheorem{definition}{Definition}
\theoremstyle{remark}
\newtheorem{example}{Example}
   \let\c@corollary=\c@theorem
   \let\c@proposition=\c@theorem
   \let\c@lemma=\c@theorem
   \let\c@definition=\c@theorem
   \let\c@remark=\c@theorem
   \let\c@example=\c@theorem
   \let\c@equation=\c@theorem
   \let\c@conjecture=\c@theorem
   \let\c@question=\c@theorem
\newcommand*{\MRref}[2]{ \href{http://www.ams.org/mathscinet-getitem?mr=#1}{MR \textbf{#1}}}
\newcommand*{\arXiv}[1]{ \href{http://www.arxiv.org/abs/#1}{arXiv:\textbf{#1}}}
\newcommand*{\Z}{\mathbb Z}
\newcommand*{\Q}{\mathbb Q}
\newcommand*{\R}{\mathbb R}
\newcommand*{\C}{\mathbb C}
\DeclareMathOperator{\rank}{rank}
\DeclareMathOperator{\lie}{Lie}
\DeclareMathOperator{\sgn}{sgn}
\DeclareMathOperator{\Gal}{Gal}
\DeclareMathOperator{\GL}{GL}
\newcounter{commentcounter}
\newcommand{\showcomments}{yes}
\newsavebox{\commentbox}
\newenvironment{com}%
{\ifthenelse{\equal{\showcomments}{yes}}%
{\footnotemark
        \begin{lrbox}{\commentbox}
        \begin{minipage}[t]{1.25in}\raggedright\sffamily\tiny
        \footnotemark[\arabic{footnote}]}
{\begin{lrbox}{\commentbox}}}
{\ifthenelse{\equal{\showcomments}{yes}}
{\end{minipage}\end{lrbox}\marginpar{\usebox{\commentbox}}}
{\end{lrbox}}}
\newcommand\myurl[1]{\url{#1}}
\newcommand{\ignore}[1]{}
\begin{document}

\begin{abstract} 
  Given an \(S\)-arithmetic group, we ask how much information on the ambient algebraic group, number field of definition, and set of places \(S\) is encoded in the commensurability class of the profinite completion.  As a first step, we show that the profinite commensurability class of a higher rank \(S\)-arithmetic group determines the number field up to arithmetical equivalence and the places in \(S\) above unramified primes.  We include applications to profiniteness questions of group invariants.
\end{abstract} 

\maketitle

\section{Introduction and setup}

Let \(k\) be a number field, let \(\mathbf{G}\) be a simply-connected absolutely almost simple linear algebraic \(k\)-group and let \(S\) be a finite set of places of \(k\) containing all the infinite ones.  Picking a \(k\)-embedding \(\mathbf{G} \subset \mathbf{GL_n}\), the three objects \(\mathbf{G}\), \(k\) and \(S\) define what we want to call a \emph{standard \(S\)-arithmetic group} \(\Gamma = \mathbf{G}(\mathcal{O}_{k, S}) = \mathbf{G}(k) \cap \mathbf{GL_n}(\mathcal{O}_{k, S})\), where \(\mathcal{O}_{k,S}\) are the \(S\)-integers of \(k\).  We say more generally that an abstract group \(\Gamma\) is \emph{\(S\)-arithmetic} if there are \(\mathbf{G}\), \(k\) and \(S\) as above such that \(\Gamma\) can be embedded as a subgroup of \(\mathbf{G}(\overline{k})\) which is commensurable to the standard \(S\)-arithmetic group \(\mathbf{G}(\mathcal{O}_{k, S})\) for one (then any) \(k\)-embedding \(\mathbf{G} \subset \mathbf{GL_n}\).  We call \((k, \mathbf{G}, S)\) an \emph{ambient triple} for \(\Gamma\).  Recall that the \emph{\(S\)-rank} of \(\mathbf{G}\) is defined by
  \[ \rank_S(\mathbf{G}) = \sum_{v \in S} \rank_{k_v} (\mathbf{G}) \]
  where \(\rank_{k_v}(\mathbf{G})\) is the dimension of a maximal \(k_v\)-split torus in \(\mathbf{G}\).
  \begin{definition}
    We say that \((k,\mathbf{G},S)\) has \emph{higher rank} if \(\rank_S(\mathbf{G}) \ge 2\) and if \(\rank_{k_v} (\mathbf{G}) \ge 1\) for all finite \(v \in S\) (``no compact \(p\)-adic factors'').
\end{definition}
  We say that \(\Gamma\) has \emph{higher rank} if it has an ambient triple of higher rank.  Two ambient triples \((k, \mathbf{G}, S)\) and \((l, \mathbf{H}, T)\) are called \emph{equivalent} if there is a field isomorphism \(\sigma \colon k \rightarrow l\) such that \(\prescript{\sigma}{}{\!S} = T\) and such that \(\prescript{\sigma}{}{\!\mathbf{G}}\) is \(l\)-isomorphic to \(\mathbf{H}\).  Here the set \(\prescript{\sigma}{}{\!S}\) consists of all places of \(l\) of the form \(w = v \circ \sigma^{-1}\) for \(v \in S\), and \(\prescript{\sigma}{}{\!\mathbf{G}}\) is the \(l\)-group obtained from \(\mathbf{G}\) via \(\sigma\).  An equivalence class \([k, \mathbf{G}, S]\) will be called an \emph{ambient class}.  Clearly, if one triple in \([k, \mathbf{G}, S]\) is ambient for \(\Gamma\), then so is any other triple in \([k, \mathbf{G}, S]\).  It is moreover well-defined to say that an ambient class \([k,\mathbf{G},S]\) has higher rank.  \emph{Margulis superrigidity} combined with \emph{strong approximation} implies that a higher rank \(S\)-arithmetic group \(\Gamma\) determines a higher rank ambient class \([k, \mathbf{G}, S]\) uniquely.  Details will be given in Proposition~\ref{prop:ambientclassunique}.  

  \subsection{Profinite commensurability} We call two groups (abstractly) \emph{commensurable} if they have isomorphic finite index subgroups.  The discussion so far says that ambient classes provide the commensurability classification of higher rank \(S\)-arithmetic groups: a commensurability class \([\Gamma]\) of higher rank \(S\)-arithmetic groups \(\Gamma\) determines an ambient class \([k, \mathbf{G}, S]\) and an ambient class \([k, \mathbf{G}, S]\) defines the commensurability class \([\mathbf{G}(\mathcal{O}_{k, S})]\).  These two constructions are well-defined and inverses of one another (Corollary~\ref{cor:commensurabilityclassification}).  The \emph{profinite completion} \(\widehat{\Gamma}\) of a group \(\Gamma\) is the projective limit over the inverse system of all finite quotients of \(\Gamma\).

  \begin{definition}
    Two groups are called \emph{profinitely commensurable} if their profinite completions have isomorphic open subgroups.
  \end{definition}

    For finitely generated groups, like our \(S\)-arithmetic groups, we could have equivalently required that the profinite completions have isomorphic finite index subgroups.  This is a consequence of a deep theorem due to Nikolov-Segal~\cite{Nikolov-Segal:strong-completeness}. It is readily verified that commensurable \(S\)-arithmetic groups are also profinitely commensurable (Proposition~\ref{prop:commimpliesprofcomm}).  Viewing that the \emph{commensurability class} of a higher rank \(S\)-arithmetic group \(\Gamma\) remembers the entire ambient information, it is a remarkable observation due to M.\,Aka~\cite{Aka:Arithmetic} that the \emph{profinite commensurability class} of \(\Gamma\) determines neither \(k\), nor \(\mathbf{G}\), nor \(S\).   Here are some examples.
    
  \begin{enumerate}[(i)]
  \item \label{item:arithequivfields} For \(k = \Q(\sqrt[8]{3})\) and \(l = \Q(\sqrt[8]{48})\) let \(S\) and \(T\) consist of all infinite places of \(k\) and \(l\), respectively, and of the finite places lying over \(2\) and~\(3\).  Then \(\mathbf{SL_3}(\mathcal{O}_{k, S})\) and \(\mathbf{SL_3}(\mathcal{O}_{l, T})\) are profinitely commensurable.
    
  \item \label{item:spin} For a real quadratic number field \(k\), the groups \(\textbf{Spin(6,1)}(\mathcal{O}_k)\) and \(\textbf{Spin(5, 2)}(\mathcal{O}_k)\) are profinitely commensurable.
    
  \item \label{item:galoisconj} Let \(k / \Q\) be Galois and let each of \(S\) and \(T\) contain precisely one pair of finite places, all four lying over a fixed split prime \(p\).  Then \(\mathbf{SL_3}(\mathcal{O}_{k, S})\) and \(\mathbf{SL_3}(\mathcal{O}_{k, T})\) are profinitely commensurable but not isomorphic unless the two pairs are \(\Gal(k/\Q)\)-conjugate.
  \end{enumerate}

  To appreciate these examples, be aware that \(S\)-arithmetic groups, as we defined them above, are finitely generated and linear, whence \emph{residually finite}: they embed densely into their profinite completion.  All these observations call for a systematic study of higher rank \(S\)-arithmetic groups from the profinite point of view.

  \begin{question} \label{question:what-notion}
    What are the strongest notions of equivalence between the fields \(k\) and \(l\), the groups \(\mathbf{G}\) and \(\mathbf{H}\), and the sets \(S\) and \(T\) that can be concluded from higher rank groups \(\mathbf{G}(\mathcal{O}_{k, S})\) and \(\mathbf{H}(\mathcal{O}_{l, T})\) being profinitely commensurable?
  \end{question}

  Ideally, requiring the three notions of equivalence simultaneously would recover the equivalence relation of profinite commensurability.  But there is no a priori reason why profinite commensurability would have such a tripartite description.  With Theorem~\ref{thm:profcomm-implies-arith-equiv} and Theorem~\ref{thm:profcomm-implies-similar-s} below, we offer first results for the fields and the places.

  \begin{theorem} \label{thm:profcomm-implies-arith-equiv}
    Suppose that \(\mathbf{G}(\mathcal{O}_{k,S})\) and \(\mathbf{H}(\mathcal{O}_{l,T})\) have higher rank and are profinitely commensurable.  Then \(k\) and \(l\) are arithmetically equivalent.
  \end{theorem}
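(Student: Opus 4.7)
\emph{Proof plan.} The idea is to exploit CSP to replace the two profinite completions by explicit $S$-adic congruence completions, and then at each rational prime $p$ to read off the decomposition type of $p$ in $k$ from the $\mathbb{Q}_p$-Lie algebra of the local factor.

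First, by the congruence subgroup property together with strong approximation (available under our higher-rank hypothesis), the completion $\widehat{\mathbf{G}(\mathcal{O}_{k,S})}$ is a finite central extension of the $S$-adic congruence completion $\prod_{v\notin S,\,v\nmid\infty}\mathbf{G}(\mathcal{O}_v)$; the same holds on the $l$-side. Profinite commensurability therefore translates into commensurability of these two adelic products. Both sides admit a canonical direct-product decomposition indexed by the rational primes, and for almost every $p$ the local factor $\prod_{v|p}\mathbf{G}(\mathcal{O}_v)$ is a maximal virtually $p$-adic analytic direct factor of the whole profinite group and is therefore determined up to commensurability by the abstract profinite structure.

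Second, passing to Lie algebras converts the commensurability of the two $p$-factors into an isomorphism
\[
\textstyle\bigoplus_{v|p}\lie(\mathbf{G})(k_v)\;\cong\;\bigoplus_{w|p}\lie(\mathbf{H})(l_w)
\]
of $\mathbb{Q}_p$-Lie algebras. Since $\mathbf{G}$ is absolutely almost simple and $\Gal(\overline{\mathbb{Q}}_p/\mathbb{Q}_p)$ permutes the embeddings $k_v\hookrightarrow\overline{\mathbb{Q}}_p$ transitively, each summand on the left is simple as a $\mathbb{Q}_p$-Lie algebra, and its centroid (the $\mathbb{Q}_p$-linear operators $T$ satisfying $T[x,y]=[Tx,y]$ for all $x,y$) recovers the field $k_v$; analogously on the right. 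A Krull--Schmidt argument for semisimple $\mathbb{Q}_p$-Lie algebras matches simple summands bijectively, and comparing centroids of matched summands yields a bijection $v\leftrightarrow w$ with $k_v\cong l_w$ as $\mathbb{Q}_p$-algebras, so in particular $(e_v,f_v)=(e_w,f_w)$ at every matched pair.

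Finally, since almost every rational prime $p$ is unramified in both $k$ and $l$, the multisets of residue degrees of $k$ and of $l$ above $p$ coincide for almost all $p$; by the Gassmann--Perlis criterion this forces $\zeta_k=\zeta_l$, i.e.\ arithmetic equivalence. The principal obstacle I foresee is the first step: since the finite central congruence kernel and the infinite tail of other-prime factors could a priori interfere with an abstract commensurability isomorphism, some care is required to show that the maximal virtually $p$-adic analytic direct factor really is an invariant of the commensurability class of the profinite group. Once this intrinsic extraction of the $p$-factor is justified, the Lie algebra and Gassmann--Perlis steps are comparatively formal.
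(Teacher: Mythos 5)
Your plan follows essentially the same strategy as the paper: use CSP plus strong approximation to replace $\widehat{\mathbf{G}(\mathcal{O}_{k,S})}$ by a finite central extension of $\prod_{v\notin S}\mathbf{G}(\mathcal{O}_v)$, extract the $p$-local piece, pass to its $\mathbb{Q}_p$-Lie algebra $\mathfrak{g}_p$, and read off the decomposition type of $p$ in $k$ from the simple ideals. You have correctly identified the one non-formal step, and it is indeed the crux: showing that $\mathfrak{g}_p$ is an invariant of the commensurability class of the profinite group. The paper fills this gap by quoting Aka's result (Proposition~\ref{prop:completiondeterminespalgebras}), which rests on Margulis superrigidity applied to homomorphisms from the $S$-arithmetic group into $p$-adic Lie groups; the intrinsic characterization is that $\mathfrak{g}_p$ is the Lie algebra of a maximal $p$-adic analytic quotient of $\widehat{\Gamma}$. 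You should not expect to get this from abstract profinite group theory alone (for instance, there is no general Krull--Schmidt for profinite direct products, and an abstract commensurability isomorphism between two adelic products need not respect the prime-by-prime decomposition); superrigidity at the finite places is the decisive input.

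Two smaller remarks. First, where you invoke centroids to recover $k_v$ from the simple ideal $\operatorname{Res}_{k_v/\mathbb{Q}_p}\lie(\mathbf{G})(k_v)$, the paper proceeds more economically by using only the $\mathbb{Q}_p$-dimension of each simple ideal: since $\dim_{\mathbb{Q}_p}\mathfrak{h}_i=\dim\mathbf{G}\cdot[k_{v_i}:\mathbb{Q}_p]=\dim\mathbf{G}\cdot e_{v_i}f_{v_i}$ and $\dim\mathbf{G}$ is itself recovered from $\widehat{\Gamma}$, the dimensions determine $e_{v_i}f_{v_i}$, hence $f_{v_i}$ at unramified $p$ -- which is all that is needed for the decomposition type and hence arithmetic equivalence. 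Your centroid argument is correct and would in fact recover $k_v$ as a $\mathbb{Q}_p$-algebra (so $(e_v,f_v)$, not just the product), but this extra strength is not exploited in Theorem~\ref{thm:profcomm-implies-arith-equiv}; it would be relevant if one were aiming at adelic rather than arithmetic equivalence. Second, keep in mind that the local factors you want are those over $v\mid p$ with $v\notin S$, so you should restrict to $p$ that are $S$-unrelated and $T$-unrelated; this is still all but finitely many $p$, which is enough by the ``almost all'' version of arithmetic equivalence (Theorem~\ref{thm:characterizations-arith-equiv}).
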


  The shortest way to define \emph{arithmetical equivalence} of two number fields is to say that they have the same Dedekind zeta function.  An example is given by the pair \(k = \Q(\sqrt[8]{3})\) and \(l = \Q(\sqrt[8]{48})\) from \eqref{item:arithequivfields} above.  We discuss the notion of arithmetical equivalence more thoroughly in Section~\ref{subsection:arith-equiv-fields}.

    Theorem~\ref{thm:profcomm-implies-arith-equiv} says that the notion of equivalence in Question~\ref{question:what-notion} between \(k\) and \(l\) must at least be as strong as arithmetical equivalence.  But in fact, for any pair of arithmetically equivalent fields \(k\) and \(l\), a generalization of example~\eqref{item:arithequivfields} above gives \(S\)-arithmetic groups defined over \(k\) and \(l\) which are profinitely commensurable (Proposition~\ref{prop:arith-equiv-fields-give-profinitely-iso-groups}).  Hence the notion of equivalence between \(k\) and \(l\) cannot be stronger than arithmetical equivalence, either.
    
    We will see in Theorem~\ref{thm:consequences-arith-equiv}\,\eqref{item:same-discriminant} below that arithmetically equivalent fields have equal discriminant.  Therefore the same rational primes \(p\) ramify in \(k\) and \(l\).  Let \(S_p\) and \(T_p\) be the set of places in \(S\) and \(T\) lying over \(p\).
    
  \begin{theorem} \label{thm:profcomm-implies-similar-s}
    Suppose \(\mathbf{G}(\mathcal{O}_{k,S})\) and \(\mathbf{H}(\mathcal{O}_{l,T})\) have higher rank and are profinitely commensurable.  Then for each unramified prime \(p\), there is a residue degree preserving bijection \(S_p \rightarrow T_p\).
  \end{theorem}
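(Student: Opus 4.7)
The plan is to show that for each unramified prime \(p\) the multiset of residue degrees of places above \(p\) lying outside \(S\) is a profinite commensurability invariant of \(\mathbf{G}(\mathcal{O}_{k,S})\), and then combine with the arithmetical equivalence from Theorem~\ref{thm:profcomm-implies-arith-equiv} to conclude. By CSP, the profinite completion \(\widehat{\Gamma}\) of \(\Gamma = \mathbf{G}(\mathcal{O}_{k,S})\) agrees, modulo a finite central kernel, with the congruence completion \(\prod_{v\notin S}\mathbf{G}(\mathcal{O}_v)\). Grouping factors by residue characteristic gives a canonical direct product decomposition
\[
\widehat{\Gamma} \,=\, \Gamma_p \times \Gamma^{(p)}, \qquad \Gamma_p = \prod_{v\mid p,\ v\notin S}\mathbf{G}(\mathcal{O}_v),
\]
in which \(\Gamma_p\) is a compact \(p\)-adic analytic group while \(\Gamma^{(p)}\) has no closed \(p\)-adic analytic subgroup of positive dimension. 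I would argue that \(\Gamma_p\) is visible in \(\widehat{\Gamma}\) in a way intrinsic up to finite data — essentially as the maximal closed normal \(p\)-adic analytic subgroup of bounded rank — so that an isomorphism of open subgroups \(\widehat{\Gamma}\cong\widehat{\mathbf{H}(\mathcal{O}_{l,T})}\) descends, modulo finite kernels, to a commensurability between \(\Gamma_p\) and \(H_p = \prod_{w\mid p,\ w\notin T}\mathbf{H}(\mathcal{O}_w)\).

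Commensurable \(p\)-adic analytic groups share a common \(\Q_p\)-Lie algebra, whence a \(\Q_p\)-Lie isomorphism
\[
\bigoplus_{v\mid p,\ v\notin S}\lie(\mathbf{G})(k_v) \;\cong\; \bigoplus_{w\mid p,\ w\notin T}\lie(\mathbf{H})(l_w).
\]
Since \(\mathbf{G}\) is absolutely almost simple, each summand \(\mathfrak{g}_v:=\lie(\mathbf{G})(k_v)\) is absolutely simple over \(k_v\). The identity \(\operatorname{ad}(\lambda y)=\lambda\,\operatorname{ad}(y)\) for \(\lambda\in k_v\) shows that the \(\Q_p\)-ideal generated by any nonzero element of \(\mathfrak{g}_v\) is automatically closed under \(k_v\)-scalar multiplication, hence is a \(k_v\)-ideal, hence equals \(\mathfrak{g}_v\); so \(\mathfrak{g}_v\) is \(\Q_p\)-simple as well. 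The summands are therefore the minimal \(\Q_p\)-ideals, and their isomorphism types must coincide as multisets. From each such simple summand the field \(k_v\) is recovered as the \(\Q_p\)-centroid \(\{T\in\End_{\Q_p}(\mathfrak{g}_v):T[x,y]=[Tx,y]\ \forall x,y\}\); a dimension count after base change to \(\overline{\Q_p}\) confirms that this centroid has \(\Q_p\)-dimension \([k_v:\Q_p]\) and therefore equals \(k_v\). For an unramified \(v\) we have \([k_v:\Q_p]=f(v/p)\), so that \(\{f(v/p):v\mid p,\ v\notin S\}=\{f(w/p):w\mid p,\ w\notin T\}\). Theorem~\ref{thm:profcomm-implies-arith-equiv} then supplies arithmetical equivalence of \(k\) and \(l\), which forces (as a standard consequence of arithmetic equivalence, cf.\ Theorem~\ref{thm:consequences-arith-equiv}) the equality of multisets of residue degrees of \emph{all} places above \(p\). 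Subtracting the two equalities yields \(\{f(v/p):v\in S_p\}=\{f(w/p):w\in T_p\}\), i.e.\ the asserted residue degree preserving bijection.

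The main obstacle is the transfer step in the first paragraph: making \(\Gamma_p\) sufficiently intrinsic that an abstract isomorphism of open subgroups of two profinite completions matches \(\Gamma_p\) with \(H_p\). Open subgroups of \(\widehat{\Gamma}\) do not in general split along the prime decomposition, and the image of \(\Gamma_p\) under an abstract isomorphism of open subgroups could a priori pick up finite \(p\)-power contributions from places of residue characteristic different from \(p\). The cleanest route seems to be a Lazard-style characterization isolating a Lie algebra at \(p\) — for instance by identifying a canonical maximal closed normal uniform pro-\(p\) subgroup of bounded rank — so that the \(\Q_p\)-Lie algebra invariant is preserved up to the finite data that does not change the Lie algebra. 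Once this technical step is in place, the remaining ingredients (\(\Q_p\)-simplicity of \(\mathfrak{g}_v\), centroid identification of \(k_v\), and the subtraction against arithmetical equivalence) are routine.
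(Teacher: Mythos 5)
Your overall architecture — the $p$-Lie algebra of the congruence completion has simple $\Q_p$-ideals indexed by the places $v\mid p$ outside $S$, their dimensions recover $[k_v:\Q_p]$, and for unramified $p$ this equals $f(v/p)$, which one subtracts from the full decomposition type supplied by arithmetical equivalence — is exactly the paper's. The difference is that you try to re-derive the crucial fact that this $p$-Lie algebra is a profinite commensurability invariant, and you candidly flag in your last paragraph that you cannot complete that step.

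That flagged step is a genuine gap, and it is precisely the content of Aka's Proposition~14, which the paper records as Proposition~\ref{prop:completiondeterminespalgebras} and uses as a black box: the $p$-algebras $\mathfrak{g}_p$ depend only on $\widehat{\Gamma}$. The route you propose — isolating $\Gamma_p=\prod_{v\mid p,\,v\notin S}\mathbf{G}(\mathcal{O}_v)$ as a canonical \emph{subgroup} (a maximal closed normal $p$-adic analytic subgroup, or a maximal uniform pro-$p$ subgroup of bounded rank) — runs into the difficulty you yourself describe: open subgroups of $\widehat{\Gamma}$ need not split along the factor $\Gamma_p\times\Gamma^{(p)}$, and it is not clear that a canonical maximal object of the desired kind exists inside an arbitrary open subgroup. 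Aka's proof avoids this by going through \emph{quotients} rather than subgroups: Margulis superrigidity shows $\mathfrak{g}_p$ is the Lie algebra of a maximal $p$-adic analytic quotient of $\widehat{\Gamma}$, and quotients interact more cleanly with the restricted product structure. With Proposition~\ref{prop:completiondeterminespalgebras} in hand, the rest of your argument (the $\Q_p$-simplicity of each summand, recovery of $[k_v:\Q_p]$ — the paper uses dimension counting where you use the centroid, but both work — and the subtraction against the equality of decomposition types) matches the paper's proof. So the proposal is not wrong so much as incomplete at the one place where the real work lies; citing Proposition~\ref{prop:completiondeterminespalgebras} would close the gap and make the two proofs essentially identical.
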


Similarly as above, this notion of equivalence between \(S\) and \(T\) is close to optimal (Proposition~\ref{prop:specifiedbijection}).  It could only be improved by an additional assertion on the places over ramified primes.

\subsection{Applications to profiniteness of group invariants}  Once one has realized that residually finite groups with the same profinite completion might not be isomorphic, it becomes a fair question to ask what invariants of a group are actually already invariants of the profinite completion.  Similarly, one can ask what commensurability invariants are already profinite commensurability invariants.  These questions have attracted quite some research efforts recently and shall also be the theme of this section.

Aka's example~\eqref{item:spin} shows that the profinite commensurability class does not determine the group \(\mathbf{G}\) and hence neither does it determine the real Lie group \(G = \prod_{v \mid \infty} \mathbf{G}(k_v)\) in which an arithmetic group is a lattice.  But even if one fixes a \(\Q\)-group \(\mathbf{G}\), one can still construct profinitely commensurable \(S\)-arithmetic groups with different surrounding Lie group \(G_S = \prod_{v \in S} \mathbf{G}(k_v)\) by considering \(\mathbf{G}\) over different fields.  Here is an example.

  \begin{proposition} \label{prop:ramified-primes-in-s}
    Let \(k = \Q(\sqrt[8]{97})\) and \(l = \Q(\sqrt[8]{1552})\) and let \(S\) and \(T\) consist of all places lying over \(2\) and \(97\).  Then \(\mathbf{SL_3}(\mathcal{O}_{k, S})\) and \(\mathbf{SL_3}(\mathcal{O}_{l, T})\) have isomorphic profinite completions but \({(\textup{SL}_3)}_S\) and \({(\textup{SL}_3)}_T\) are not isomorphic.
    \end{proposition}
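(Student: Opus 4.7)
The two claims track example~\eqref{item:arithequivfields} closely, using that $1552 = 2^4 \cdot 97$.  For the profinite isomorphism, I would first observe that $k = \Q(\sqrt[8]{97})$ and $l = \Q(\sqrt[8]{1552})$ are arithmetically equivalent: exactly the same Perlis--Gassmann argument that identifies $\Q(\sqrt[8]{3})$ and $\Q(\sqrt[8]{48})$ as arithmetically equivalent applies here with $3$ replaced by $97$.  Proposition~\ref{prop:arith-equiv-fields-give-profinitely-iso-groups}, applied to $\mathbf{G} = \mathbf{SL_3}$ with $S$ and $T$ the sets of finite places above $2$ and $97$ together with the archimedean places, then produces the claimed isomorphism of profinite completions.

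For the non-isomorphism of the ambient Lie groups, I would compare the multisets of completions of $k$ and $l$ at the places in $S$ and $T$.  At the archimedean places, both fields have two real and three complex embeddings, so the real factors coincide.  At $p = 97$, a standard Eisenstein argument gives a unique place in $k$ with completion $\Q_{97}(\sqrt[8]{97})$; for $l$ the same conclusion follows once one notes $16 \in (\Q_{97}^\times)^8$, verified by $16^{12} \equiv 2^{48} \equiv 1 \pmod{97}$, so that $x^8 - 1552$ becomes Eisenstein after rescaling by an $8$th root of $16$ in $\Q_{97}$.  The decisive computation is at $p = 2$.  Since $97 \equiv 1 \pmod{32}$, the unit $97$ is an $8$th power in $\Q_2^\times$, say $97 = \gamma^8$ with $\gamma \in \Z_2^\times$, while $\mu_8(\Q_2) = \{\pm 1\}$.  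Sorting the eight roots $\gamma\zeta$ of $x^8 - 97$ into Galois orbits under $\Gal(\Q_2(\zeta_8)/\Q_2) \cong (\Z/8)^\times$ yields orbits of sizes $1,1,2,4$ and hence
\[
k \otimes_\Q \Q_2 \;\cong\; \Q_2 \times \Q_2 \times \Q_2(\ima) \times \Q_2(\zeta_8).
\]
For $l$, the eight roots of $x^8 - 1552 = x^8 - 16\gamma^8$ have the form $\gamma\sqrt{2}\,\zeta_8^j$; identifying the fixed fields of their stabilizers in $\Gal(\Q_2(\zeta_8)/\Q_2)$ gives four Galois orbits of size two and
\[
l \otimes_\Q \Q_2 \;\cong\; \Q_2(\sqrt{2}) \times \Q_2(\sqrt{-2}) \times \Q_2(\ima) \times \Q_2(\ima).
\]
In particular, $\Q_2$ appears twice as a completion of $k$ but not at all as a completion of $l$.

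Finally, $\mathbf{SL_3}(F)$ over a non-archimedean local field $F$ is a topologically simple Lie group whose isomorphism type determines $F$, so the decomposition of $(\mathrm{SL}_3)_S$ and $(\mathrm{SL}_3)_T$ into almost simple factors is essentially unique and their isomorphism types are detected by the multisets of local completions.  The mismatch at $p = 2$ then rules out an isomorphism $(\mathrm{SL}_3)_S \cong (\mathrm{SL}_3)_T$.  The most delicate step will be the orbit analysis for $x^8 - 1552$ over $\Q_2$, where the fixed fields of the four order-two subgroups of $(\Z/8)^\times$ must be correctly identified with $\Q_2(\sqrt{2})$, $\Q_2(\sqrt{-2})$, and $\Q_2(\ima)$; the remaining Eisenstein and Hensel steps are routine.
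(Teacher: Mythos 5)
Your proof is correct, and the overall structure (arithmetic equivalence via the Komatsu/Perlis pattern for $\Q(\sqrt[8]{a})$ and $\Q(\sqrt[8]{16a})$, then Proposition~\ref{prop:arith-equiv-fields-give-profinitely-iso-groups} for the profinite isomorphism, then a comparison of $2$-adic factors for the non-isomorphism) matches the paper's outline. Where you genuinely diverge is in the key step. The paper simply cites Perlis for the ramification data at $2$ --- namely that $2$ splits into four places with ramification indices $1,1,2,4$ in $k$ and $2,2,2,2$ in $l$ --- and then compares the dimensions of the simple ideals in the Lie algebras of the $2$-adic factors ($8,8,16,32$ vs.\ $16,16,16,16$). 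You instead carry out the computation from scratch: using that $97 \equiv 1 \pmod{32}$ is an $8$th power in $\Z_2^\times$, you sort the roots of $x^8 - 97$ and of $x^8 - 16\cdot 97$ into $\Gal(\Q_2(\zeta_8)/\Q_2)$-orbits and identify the local completions $\Q_2,\Q_2,\Q_2(\ima),\Q_2(\zeta_8)$ for $k$ and $\Q_2(\sqrt{2}),\Q_2(\sqrt{-2}),\Q_2(\ima),\Q_2(\ima)$ for $l$; I checked the orbit and fixed-field bookkeeping and it is right (and is consistent with Perlis's ramification indices). This makes the argument self-contained where the paper defers to a reference, at the cost of some length. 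You even get a slightly finer invariant (the actual multiset of local fields rather than just the ramification indices), though for the final conclusion the degree data already suffices. Two minor stylistic points: $\mathbf{SL_3}(F)$ has a nontrivial center, so it is \emph{almost} simple rather than topologically simple, and the fact that it determines $F$ up to isomorphism is most cleanly seen at the Lie-algebra level as the paper does; and your verification at $p=97$ (checking $16$ is an $8$th power in $\Q_{97}^\times$) is not actually needed for the non-isomorphism, since the mismatch already occurs at $p=2$, but it does confirm that the $97$-adic factors agree.
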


   The fields \(k\) and \(l\) are arithmetically equivalent and have common discriminant \(-2^{10} 97^7\) so that we inverted precisely the places over ramified primes.  To exclude this possibility, let us say that the set \(S\) is \emph{unramified} if it contains no places over ramified (finite) primes. For unramified \(S\), we finally obtain from Theorem~\ref{thm:profcomm-implies-arith-equiv} that the profinite commensurability class determines the Lie group \(G_S\).

    \begin{proposition} \label{prop:arith-group-determines-lie-group}
    For a fixed simply connected, absolutely almost simple linear algebraic \(\Q\)-group \(\mathbf{G}\), suppose that \([k, \mathbf{G}, S]\) and \([l, \mathbf{G}, T]\) are of higher rank and profinitely commensurable. Assume in addition that \(S\) is unramified.  Then the Lie group \(G_S\) is isomorphic to the Lie group \(G_T\).
  \end{proposition}
    
    Invoking recent work of Kyed--Petersen--Vaes~\cite{Kyed-Petersen-Vaes:LocallyCompact}, Proposition~\ref{prop:arith-group-determines-lie-group} implies the following profiniteness property for \emph{\(\ell^2\)-Betti numbers} \citelist{\cite{Kammeyer:IntroL2} \cite{Lueck:L2-Invariants}}.

    \begin{theorem} \label{thm:l2bettiprofinite}
      For a fixed simply connected, absolutely almost simple linear algebraic \(\Q\)-group \(\mathbf{G}\), suppose that \([k, \mathbf{G}, S]\) and \([l, \mathbf{G}, T]\) are of higher rank and profinitely commensurable. Assume in addition that \(S\) is unramified.  Then \(b^{(2)}_n(\mathbf{G}(\mathcal{O}_{k,S})) = 0\) if and only if \(b^{(2)}_n(\mathbf{G}(\mathcal{O}_{l,T})) = 0 \).
    \end{theorem}

    For instance, the theorem applies to the groups \(\Gamma = \mathbf{Spin(3,2)}(\mathcal{O}_{k, S})\) and \(\Lambda = \mathbf{Spin(3,2)}(\mathcal{O}_{l, T})\) with \(k\), \(l\), \(S\), and \(T\) as follows. We let \(k = \Q(\theta_1)\) and \(l= \Q(\theta_2)\) where \(\theta_1\) is a root of the first and \(\theta_2\) is a root of the second polynomial given in~\eqref{equation:mantilla} below.  For the finite sets \(S\) and \(T\), any choice is fine provided \(S_2 \cup S_{13} \cup S_{191}\) is empty and provided there are residue degree preserving bijections \(S_p \rightarrow T_p\) for all (finite) \(p\).  We will see on p.\,\pageref{page:localfields} that both \(\Gamma\) and \(\Lambda\) have a positive \(n\)-th \(\ell^2\)-Betti number if and only if \(n = 7 +2|S|\).

    In contrast, the groups \(\Gamma = \mathbf{Spin(6,1)}(\mathcal{O}_k)\) and \(\Lambda = \mathbf{Spin(5,2)}(\mathcal{O}_k)\) from example~\eqref{item:spin} are also profinitely commensurable but \(b^{(2)}_n(\Gamma) > 0\) if and only if \(n=6\) and \(b^{(2)}_n(\Lambda) > 0\) if and only if \(n = 10\).  So some restriction on the group \(\mathbf{G}\) will always be necessary when asserting profiniteness properties for \(\ell^2\)-Betti numbers of \(S\)-arithmetic groups.  We remark that Aka came up with the latter groups as examples of profinitely isomorphic groups with and without Kazhdan's property \((T)\)~\cite{Aka:PropertyT}.  A detailed account including the computation for \(\ell^2\)-Betti numbers can be found in the Master thesis of N.\,Stucki~\cite{Stucki:Master}.  Considering spinor groups over more sophisticated quadratic forms, one can actually show that the \(k\)-th \(\ell^2\)-Betti number is never a profinite invariant among \(S\)-arithmetic groups \cite{Kammeyer-Sauer:spinor} for \(k \ge 2\).  In contrast, the first \(\ell^2\)-Betti number is a profinite invariant even among finitely presented groups as a consequence of L\"uck approximation~\cite{Bridson-et-al:Fuchsian}.
    
    By Selberg's lemma, an \(S\)-arithmetic group \(\Gamma\) has a torsion-free subgroup \(\Gamma_0\) of finite index. In~\cite{Borel-Serre:Immeubles}*{Proposition~6.10}, Borel and Serre show that \(\Gamma_0\) acts freely and cocompactly on a product of a ``bordified'' symmetric space and certain Bruhat--Tits buildings.  The space is contractible and the quotient is triangulable so that it defines a finite classifying space \(B\Gamma_0\).  Thus the virtual Euler characteristic \(\chi(\Gamma) = \chi(B\Gamma_0)/[\Gamma : \Gamma_0]\) is defined (and well-defined).  Theorem~\ref{thm:l2bettiprofinite} has the consequence that the sign of the Euler characteristic is constant throughout the profinitely commensurable groups in question.

    \begin{corollary} \label{cor:signofeuler}
      For a fixed simply connected, absolutely almost simple linear algebraic \(\Q\)-group \(\mathbf{G}\), suppose that \([k, \mathbf{G}, S]\) and \([l, \mathbf{G}, T]\) are of higher rank and profinitely commensurable.  Assume in addition that \(S\) is unramified.  Then \(\sgn \chi(\mathbf{G}(\mathcal{O}_{k,S})) = \sgn \chi(\mathbf{G}(\mathcal{O}_{l,T}))\).
    \end{corollary}

    Here, as usual, the function \(\sgn(x)\) takes the values \(-1\), \(0\) or \(1\) according to whether \(x\) is \(<0\), \(=0\) or \(>0\).  We will explain at the end of the paper that it is possible to drop the assumption of unramified \(S\) and \(T\) both from Theorem~\ref{thm:l2bettiprofinite} and from Corollary~\ref{cor:signofeuler} if one assumes instead that the group \(\mathbf{G}\) splits over \(k\) and \(l\).  In the arithmetic case, when \(S\) consists of the infinite places only, a more in-depth analysis shows that the sign of the Euler characteristic is a profinite invariant even if the group \(\mathbf{G}\) is allowed to vary.  The absolute value of the Euler characteristic is then however not a profinite invariant.  These results will appear in~\cite{Kammeyer-et-al:profinite-invariants}.

    \subsection{Outline}  The article is organized as follows.  In Section~\ref{section:preliminaries} we provide necessary background material.  To wit, Section~\ref{subsection:arith-equiv-fields} is a quick report on arithmetically equivalent fields, with characterizations, properties and examples.  Section~\ref{subsection:strongapproximation} formulates strong approximation and defines the congruence completion in an adelic formulation.  Section~\ref{subsection:csp} surveys the main ideas of the congruence subgroup problem.  In Section~\ref{section:proofs}, the proofs of the results presented in this introduction are given.  The commensurability classification is given in Section~\ref{subsection:commensurability-classification}.  Section~\ref{subsection:consequences} proves the main results and Section~\ref{subsection:profiniteinvariants} concludes with the applications to profiniteness of group invariants.

    I wish to thank S.\,Kionke, G.\,Mantilla Soler, J.\,Raimbault, R.\,Sauer, and A.\,Valette for helpful communication.  I am also indebted to the Hausdorff Institute for Mathematics in Bonn for hosting the junior trimester program ``Topology'' in which part of this work was initiated.

\section{Preliminaries} \label{section:preliminaries}
 
In this section we collect necessary background material to prepare the proofs of the results from the introduction.

    \subsection{Arithmetically equivalent number fields} \label{subsection:arith-equiv-fields} Let \(k\) be a number field.  The \emph{decomposition type} of a rational prime \(p\) in \(k\) is the tuple
  \[ A_k(p) = (f_1, \ldots, f_r ) \]
  consisting of the residue degrees \(f_i = f(\mathfrak{p}_i|p)\) of all primes \(\mathfrak{p}_i\) lying over \(p\) in ascending order: \(f_1 \le \cdots \le f_r\).

  \begin{definition}
    Two number fields \(k\) and \(l\) are called \emph{arithmetically equivalent} if \(A_k(p) = A_l(p)\) for all  but finitely many rational primes \(p\).
  \end{definition}

  We will next see that this is only one out of several characterizations of this notion.  Let \(N/\Q\) be any Galois extension containing both \(k\) and \(l\) and let \(U\) and \(V\) be the subgroups of \(G = \Gal(N/\Q)\) corresponding to \(k\) and \(l\), respectively.  We denote by \(1^G_U\) and \(1^G_V\) the characters of the coset permutation representations \(\C[G/U]\) and \(\C[G/V]\).  Equivalently, \(1^G_U\) and \(1^G_V\) are the characters induced from the trivial (one-dimensional) characters \(1_U\) and \(1_V\).  Finally, let \(\zeta_k\) and \(\zeta_l\) be the Dedekind zeta functions of \(k\) and \(l\).

  \begin{theorem} \label{thm:characterizations-arith-equiv}
    The following are equivalent.
    \begin{enumerate}[(i)]
    \item The fields \(k\) and \(l\) are arithmetically equivalent. \label{item:definition-arith-equiv}
    \item We have \(A_k(p) = A_l(p)\) for \emph{all} rational primes \(p\). \label{item:all-decomp-types-equal}
    \item We have \(1^G_U = 1^G_V\). \label{item:characters-equal}
    \item We have \(\zeta_k = \zeta_l\).
    \end{enumerate}   
  \end{theorem}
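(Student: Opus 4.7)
The plan is to establish the cycle of implications (ii)$\Rightarrow$(i)$\Rightarrow$(iii)$\Rightarrow$(iv)$\Rightarrow$(ii), which simultaneously proves all four statements equivalent. The implication (ii)$\Rightarrow$(i) is immediate from the definitions.

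For (i)$\Rightarrow$(iii), the key observation is the translation of the decomposition type into group-theoretic language. For a prime \(p\) unramified in \(N\), choose a Frobenius element \(\textup{Frob}_p \in G\) (well-defined up to conjugation). The primes of \(k\) above \(p\) correspond to \(\langle \textup{Frob}_p \rangle\)-orbits on \(G/U\) with the respective orbit sizes equal to the residue degrees, so that \(A_k(p)\) is precisely the cycle type of \(\textup{Frob}_p\) acting on \(G/U\) by left multiplication, and similarly for \(G/V\). Now the crucial point is that the value \(1^G_U(g) = \#\{xU \in G/U : x^{-1}gx \in U\}\) equals the number of fixed points of \(g\) on \(G/U\), and hence the full character value \(1^G_U(g^n)\) for all \(n\) recovers the cycle type of \(g\) on \(G/U\). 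Chebotarev's density theorem asserts that every conjugacy class of \(G\) meets the Frobenius classes of infinitely many unramified primes, so the hypothesis \(A_k(p)=A_l(p)\) for cofinitely many \(p\) forces \(1^G_U(g)=1^G_V(g)\) for every \(g \in G\), giving (iii).

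For (iii)$\Rightarrow$(iv), I invoke Artin's formalism for L-functions: for any subgroup \(H \le G\) with fixed field \(F = N^H\), one has \(L(s, 1_H, N/F) = \zeta_F(s)\), and the inductivity of Artin L-functions yields \(L(s, 1^G_U, N/\Q) = L(s, 1_U, N/k) = \zeta_k\), and similarly \(L(s, 1^G_V, N/\Q) = \zeta_l\). Equality of characters thus forces equality of the zeta functions. For (iv)$\Rightarrow$(ii), use the Euler product \(\zeta_k(s) = \prod_p \prod_{\mathfrak{p}\mid p}(1-p^{-f(\mathfrak{p}\mid p)s})^{-1}\). The local factor at \(p\) is a polynomial in \(p^{-s}\) whose inverse \(\prod_i(1-p^{-f_i s})\) factors uniquely in \(\Z[p^{-s}]\) into cyclotomic-type factors, from which the unordered tuple \((f_1,\ldots,f_r)=A_k(p)\) can be read off. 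Equality of the global zeta functions therefore implies equality of local factors, hence \(A_k(p)=A_l(p)\) for every \(p\).

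The genuinely non-trivial input is the machinery of Artin L-functions used in (iii)$\Rightarrow$(iv), specifically the inductivity property; this will simply be cited from the standard references. An alternative direct route from (iii) to (ii) is conceivable via the combinatorial description of \(A_k(p)\) for ramified \(p\) in terms of double cosets \(D_P\backslash G/U\) with the inertia action of \(I_P\), but this is more intricate than going through zeta functions, so the L-function route is preferable.
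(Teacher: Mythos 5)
Your (i)\,\(\Rightarrow\)\,(iii) argument is essentially the same as the paper's: fix an unramified prime whose Frobenius lands in a prescribed conjugacy class (Chebotarev), observe that the cycle type of the Frobenius on \(G/U\) is \(A_k(p)\), and read off the fixed-point count \(1^G_U(g)\). Where you diverge is the return direction. The paper closes the loop by proving (iii)\,\(\Rightarrow\)\,(i) \emph{directly} and purely group-theoretically: from the identity \(1^G_U(g^s) = \sum_{f_i \mid s} f_i\) (valid for all \(s\) when \(g\) is a Frobenius of an unramified \(p\)), one recovers the residue degrees above unramified primes recursively from the character values, so equal characters force equal decomposition types; the paper then just cites Klingen for the equivalences with (ii) and (iv). You instead travel (iii)\,\(\Rightarrow\)\,(iv)\,\(\Rightarrow\)\,(ii)\,\(\Rightarrow\)\,(i), using the inductivity of Artin \(L\)-functions to get \(\zeta_k = L(s, 1^G_U, N/\Q)\), and then factoring Euler factors (which involve only residue degrees, not ramification indices) to recover \(A_k(p)\) for every \(p\). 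Both routes are correct. Yours buys the full four-way equivalence without outsourcing to a reference, at the cost of invoking the Artin \(L\)-function formalism; the paper's proof of (iii)\,\(\Rightarrow\)\,(i) is more elementary and self-contained but deliberately covers only two of the four conditions. One caveat worth making explicit in your (iv)\,\(\Rightarrow\)\,(ii) step: equality of Dirichlet series forces equality of all coefficients, hence equality of each local Euler factor, and then the multiset \(\{f_i\}\) is recovered from \(\prod_i (1 - X^{f_i})\) by Möbius inversion over the divisibility lattice — this is correct, but the word "cyclotomic-type" is doing some hidden work and deserves a line of justification.
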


  The result can be found in \cite{Klingen:ArithmeticalSimilarities}*{Theorem~III.1.3, p.\,77}.  Note that statement \eqref{item:characters-equal} gives an entirely group theoretic characterization of arithmetical equivalence which will be useful in a moment.  We include the proof of the corresponding equivalence \eqref{item:definition-arith-equiv} \(\Leftrightarrow\) \eqref{item:characters-equal} as it is brief and instructive.

  \begin{proof}
    \eqref{item:definition-arith-equiv} \(\Rightarrow\) \eqref{item:characters-equal}.  Given an unramified prime \(\mathfrak{Q}\) of \(N\) over a rational prime \(p\), reduction mod \(\mathfrak{Q}\) defines an isomorphism from the \emph{decomposition group} \(D(\mathfrak{Q}) = \{ \sigma \in G \colon \mathfrak{Q}^\sigma = \mathfrak{Q} \}\) to the Galois group \(\Gal(\mathcal{O}_N/\mathfrak{Q} \ /\  \Z / p)\) of the residue field.  The latter group is cyclic, canonically generated by the Frobenius automorphism \(x \mapsto x^p\).  The unique preimage \(F(\mathfrak{Q}) \in D(\mathfrak{Q})\) is called the \emph{Frobenius element} of \(\mathfrak{Q}\).  By Chebotarev's density theorem, each element \(g \in G\) is a Frobenius element \(g = F(\mathfrak{Q})\) for infinitely many primes \(\mathfrak{Q}\) in \(N\).  Thus we can choose \(\mathfrak{Q}\) above some unramified \(p\) with \(A_k(p) = A_l(p)\).  The element \(F(\mathfrak{Q})\) permutes the set \(G/U\) and the cycle lengths of this permutation, in ascending order, coincide with the decomposition type \(A_k(p)\) of \(p\) in \(k\) \cite{Klingen:ArithmeticalSimilarities}*{Theorem I.1.12, p.\,11}.  Hence \(1^G_U(g)\) is the number of primes \(\mathfrak{p_i}\) in \(k\) over \(p\) with residue degree \(f_i = 1\).  Since \(A_k(p) = A_l(p)\), it follows that \(1^G_U(g) = 1^G_V(g)\).
    
\eqref{item:characters-equal} \(\Rightarrow\) \eqref{item:definition-arith-equiv}.  The above shows more generally that the residue degrees \(f_i\) in \(k\) above any unramified prime \(p\) satisfy the relation \(1^G_U(g^s) = \sum_{f_i \mid s} f_i\) for any positive integer \(s\).  Thus all residue degrees above unramified primes can be computed recursively from the numbers \(1^G_U(g^s)\).
  \end{proof}

  Many field invariants turn out to be invariant under arithmetical equivalence because they can be calculated from the character \(1^G_U\) alone.  Here are some examples \cite{Klingen:ArithmeticalSimilarities}*{Theorem~III.1.4}.
  
  \begin{theorem} \label{thm:consequences-arith-equiv}
    The following invariants coincide for arithmetically equivalent number fields:
    \begin{enumerate}[(i)]
    \item degree over \(\Q\), \label{item:same-degree}
    \item Galois closure, \label{item:same-galois-closure}
    \item discriminant, \label{item:same-discriminant}
    \item signature. \label{item:same-signatures}
    \end{enumerate}
  \end{theorem}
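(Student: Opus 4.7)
My plan is to exploit the equivalent characterization \eqref{item:characters-equal} of Theorem~\ref{thm:characterizations-arith-equiv}, namely that \(1^G_U = 1^G_V\) as class functions on \(G = \Gal(N/\Q)\).  I will express each of the four invariants as a quantity determined by this single character, whereupon the conclusion is immediate.

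For \eqref{item:same-degree}, evaluate at the identity: \(1^G_U(1) = \dim \C[G/U] = [G:U] = [k:\Q]\), and analogously for \(l\).  For \eqref{item:same-signatures}, if \(N\) admits no complex embedding then both \(k\) and \(l\) are totally real and the signature is already forced by the common degree.  Otherwise I choose an embedding \(N \hookrightarrow \C\) and let \(c \in G\) be the restriction of complex conjugation.  An embedding \(g|_k \colon k \to \C\) is real iff \(g^{-1}cg \in U\), iff \(cgU = gU\), so the number of real places \(r_1(k)\) equals \(\#\{gU \in G/U : cgU = gU\} = 1^G_U(c)\).  This formula, together with the relation \(r_1 + 2r_2 = [k:\Q]\) already established, pins down the signature from the character.

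For \eqref{item:same-galois-closure}, recall that the Galois closure of \(k\) in \(N\) corresponds under the Galois correspondence to the normal core \(\bigcap_{g \in G} gUg^{-1}\), which is precisely the kernel of the permutation representation \(G \to \textup{Sym}(G/U)\).  By general character theory, the kernel of a unitary representation with character \(\chi\) equals \(\{g \in G : \chi(g) = \chi(1)\}\), so the core is read off from \(1^G_U\); the same reasoning applied to \(1^G_V = 1^G_U\) yields the same core, and hence the same Galois closure inside \(N\).

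The nontrivial step is \eqref{item:same-discriminant}, for which I invoke the Artin--Takagi conductor-discriminant formula.  Decomposing \(1^G_U = \sum_\chi m_\chi \chi\) into irreducible characters of \(G\) with multiplicities \(m_\chi = (\chi, 1^G_U)\), the formula reads \(|d_k| = \prod_\chi \mathfrak{f}(\chi)^{m_\chi}\), where \(\mathfrak{f}(\chi) \in \N\) is the Artin conductor of \(\chi\).  Since \(1^G_U = 1^G_V\) forces \(m_\chi = (\chi, 1^G_V)\), we conclude \(|d_k| = |d_l|\), and the signs agree by \(\sgn(d_k) = (-1)^{r_2}\) together with the equality of signatures already established.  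This last part is the main obstacle in the proof, since \eqref{item:same-degree}--\eqref{item:same-galois-closure} are elementary consequences of the character identity, whereas \eqref{item:same-discriminant} relies on the nontrivial input of Artin \(L\)-functions and their conductors from class field theory.
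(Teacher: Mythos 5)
Your proof is correct and follows essentially the same route as the paper: all four invariants are read off from the induced character \(1^G_U\), using evaluation at the identity for the degree, evaluation at complex conjugation for the signature, the kernel-of-a-character criterion for the Galois closure, and the Artin conductor--discriminant formula for the discriminant. The only minor refinements you add beyond the paper's argument are the explicit handling of the sign of the discriminant via \(\sgn(d_k) = (-1)^{r_2}\) and the (unnecessary but harmless) special case when \(N\) is totally real.
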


  Recall that the \emph{signature} of a number field is the pair \((r,s)\) where \(r\) and \(s\) is the number of real and complex places.
  
  \begin{proof}
    Retaining the setting and notation from Theorem~\ref{thm:characterizations-arith-equiv}, we obtain \([k : \Q] = [G : U] = 1^G_U(1)\) which shows~\eqref{item:same-degree}.  The Galois closure of \(k\) corresponds to the normal core of \(U\) which is precisely the kernel of the permutation representation \(G \rightarrow \GL(\C[G/U])\).  Evidently, this kernel is the set of all \(\sigma \in G\) satisfying \(1^G_U(\sigma) = 1^G_U(1)\) which proves~\eqref{item:same-galois-closure}.  Artin's conductor-discriminant formula says the discriminant of \(k\) is given by the Artin conductor of \(1^G_U\) \cite{Neukirch:Zahlentheorie}*{Korollar~VII.11.8, p.\,557}.  This demonstrates~\eqref{item:same-discriminant}.  Complex conjugation defines an element \(c \in G\).  The value \(1^G_U(c)\) is the number of cosets \(gU\) fixed by \(c\).  The identity \(cgU = gU\) says precisely that \(cg\) and \(g\) restrict to the same transformations on \(k\); equivalently \(g(k) \subset \R\).  Thus \(1^G_U(c) = r\) is the number of real embeddings, and \(s = (1^G_U(1)-r)/2\) is the number of complex embeddings of \(k\) up to conjugation.  This shows~\eqref{item:same-signatures}.
  \end{proof}

  \begin{example} \label{example:arith-equiv-fields}
    We conclude this excursion on arithmetically equivalent fields with a brief report on examples of low degree.  According to~\cite{Perlis:OnTheEquation}, number fields of degree~\(\le 6\) over \(\Q\) are \emph{arithmetically solitary}, meaning any pair of arithmetically equivalent fields is already conjugate over \(\Q\).  In degree seven, the fields generated by a root of the polynomials
    \[ X^7 - 7X + 3 \quad \textup{and} \quad X^7 +14X^4 -42X^2 -21X +9,\] 
respectively, are examples of arithmetically equivalent fields which are not isomorphic~\cite{Trinks:Arithmetisch}.  More generally, one can construct infinitely many pairs of irreducible degree seven polynomials which define arithmetically equivalent but non-isomorphic number fields, including totally real ones, many of them with different class numbers~\cite{Bosma-deSmit:OnArithEquiv}.  I am grateful to Guillermo Mantilla for making me aware of the explicit polynomials
    \begin{align} \label{equation:mantilla}
      & X^7 -18X^5 -28X^4 +10X^3 +24X^2 -2 \quad \text{and} \\ & X^7-3X^6-15X^5+51X^4-19X^3-41X^2+13X+11 \nonumber
    \end{align}
    which, as we will see on p.\,\pageref{page:localfields}, define a pair of non-isomorphic totally real number fields \(k\) and \(l\) that are even \emph{adelically equivalent}, meaning the adele rings \(\mathbb{A}_k\) and \(\mathbb{A}_l\) are isomorphic (as \(\mathbb{A}_{\Q}\)-algebras or, equivalently, as topological rings).  Equivalently, there is a bijection \(\varphi\) between the finite places of \(k\) and \(l\) such that \(k_v \cong_{\Q_p} l_{\varphi(v)}\) for all \(v \mid p\) and all \(p\) \cite{Klingen:ArithmeticalSimilarities}*{Theorem~2.3.(b), p.\,237}.  Since \([k_v : \Q_p] = e_v f_v\) is the product of ramification index and residue degree, this shows that adelic equivalence is a stronger equivalence relation than arithmetical equivalence.
    
  In degree eight, examples of arithmetically and adelically equivalent fields occur among simple radical extensions.  If \(a\) is any square-free integer with \(|a| \ge 3\), then the fields \(k = \Q(\sqrt[8]{a})\) and \(l = \Q(\sqrt{2}\sqrt[8]{a})\) are arithmetically equivalent and not conjugate~\cite{Komatsu:OnTheAdeleRings}.  If (and only if) in addition \(a = 2, 7, 14, 15 \mod 16\), the fields \(k\) and \(l\) are adelically equivalent.  The eighth root \(\sqrt[8]{a}\) can be replaced by any \(2^r\)-th root \(\sqrt[2^r]{a}\) for \(r \ge 3\), creating examples of arithmetically and adelically equivalent fields of arbitrarily large degree.
  \end{example}

  \subsection{Adeles, strong approximation, and the congruence completion} \label{subsection:strongapproximation} Let \(\mathbb{A}_{k,S}\) be the ring of \emph{{\(S\)-truncated} adeles} of \(k\): the \emph{restricted product} \(\prod^r_{v \notin S} k_v\) consisting of all elements in \(\prod_{v \notin S} k_v\) with almost all coordinates in the valuation ring \(\mathcal{O}_v\) of \(k_v\).  The ring \(\mathbb{A}_{k, S}\) becomes a locally compact topological ring if we declare that rectangular sets \(\prod_{v \in E \setminus S} U_v \times \prod_{v \notin E} \mathcal{O}_v\) for finite \(E \supset S\) and open \(U_v \subset k_v\) form a basis of the topology.  We have a diagonal embedding \(k \rightarrow \mathbb{A}_{k,S}\) which is dense according to the \emph{strong approximation} theorem \cite{Platonov-Rapinchuk:AlgebraicGroups}*{Theorem~1.5, p.\,14}.  Here it was important that \(S\) is always nonempty as it contains the infinite places of \(k\).  The subspace topology of the open and closed subset \(\prod_{v \notin S} \mathcal{O}_v \subset \mathbb{A}_{k,S}\) is generated by the open sets \(\prod_{v \in E \setminus S} U_v \cap \mathcal{O}_v \times \prod_{v \notin E} \mathcal{O}_v\) so that strong approximation implies that the closure of \(\mathcal{O}_{k,S}\) under the embedding \(k \rightarrow \mathbb{A}_{k,S}\) is the product \(\prod_{v \notin S} \mathcal{O}_v\).  The latter has the equivalent algebraic description \(\prod_{v \notin S} \mathcal{O}_v = \widehat{\mathcal{O}_{k,S}}\), where \(\widehat{\mathcal{O}_{k,S}}\) is the \emph{profinite completion} of the ring \(\mathcal{O}_{k,S}\) defined as 
  \[ \widehat{\mathcal{O}_{k,S}} = \varprojlim\nolimits_{\mathfrak{a} \neq 0} \mathcal{O}_{k,S} / \mathfrak{a}, \]
  the projective limit along the inverse system of all factor rings by nonzero ideals \(\mathfrak{a} \subset \mathcal{O}_{k,S}\).

  To see this, we observe that the canonical map \(\mathcal{O}_{k,S} \rightarrow \widehat{\mathcal{O}_{k,S}}\) is an embedding.  The embedding is moreover dense by strong approximation, which in this setting is just a reformulation of the Chinese remainder theorem.  The nonzero ideals \(\mathfrak{a} \subset \mathcal{O}_{k,S}\) form a fundamental system of neighborhoods of zero for the subspace topology.  This topology agrees with the topology of \(\mathcal{O}_{k,S}\) as a subset of \(\prod_{v \notin S} \mathcal{O}_v\) because the sets \(\prod_{v \in E \setminus S} \mathfrak{p}_v^{n_v} \times \prod_{v \notin E} \mathcal{O}_v\) are such a fundamental system in \(\prod_{v \notin S} \mathcal{O}_v\), where \(n_v \ge 0\) and \(\mathfrak{p}_v \subset \mathcal{O}_v\) is the maximal ideal.  Thus both \(\widehat{\mathcal{O}_{k,S}}\) and \(\prod_{v \notin S} \mathcal{O}_v\) are completions of the topological ring \(\mathcal{O}_{k,S}\) (with respect to the canonical uniform structure) and therefore the identity map on \(\mathcal{O}_{k,S}\) extends uniquely to an isomorphism \(\widehat{\mathcal{O}_{k,S}} \cong \prod_{v \notin S} \mathcal{O}_v\).

  Similarly, for a \(k\)-subgroup \(\mathbf{G} \subset \mathbf{GL_n}\), we define the \(\mathbb{A}_{k,S}\)-points \(\mathbf{G}(\mathbb{A}_{k,S})\) as the restricted product \(\prod^r_{v \notin S} \mathbf{G}(k_v)\) where almost all coordinates lie in \(\mathbf{G}(\mathcal{O}_v)\).  We thus have a diagonal embedding of the \(k\)-rational points \(\mathbf{G}(k) \subset \mathbf{G}(\mathbb{A}_{k,S})\) and \(\mathbf{G}\) is said to have the \emph{strong approximation} property (with respect to \(S\)) if this embedding is dense.  Arguing as above, we see that strong approximation implies that \(\mathbf{G}(\mathcal{O}_{k,S})\) has closure \(\prod_{v \notin S} \mathbf{G}(\mathcal{O}_v)\) under this embedding.  The strong approximation theorem due to Kneser--Platonov \cite{Platonov-Rapinchuk:AlgebraicGroups}*{Theorem~7.12, p.\,427} asserts that for a \(k\)-simple, simply-connected group \(\mathbf{G}\), strong approximation with respect to \(S\) is equivalent to \(G_S\) being noncompact.  For higher rank groups, this condition is granted by the inequality \(\rank_S \mathbf{G} \ge 2\).  The embedding \(\mathbf{G}(\mathcal{O}_{k,S}) \subset \prod_{v \notin S} \mathbf{G}(\mathcal{O}_v)\) endows \(\mathbf{G}(\mathcal{O}_{k,S})\) with a subspace topology.  In this topology, a fundamental system of neighborhoods of the unit element \(e \in \mathbf{G}(\mathcal{O}_{k,S})\) consists of matrices whose entries differ from the entries of the unit matrix by elements in \(\mathcal{O}_{k,S}\) with certain prescribed minimal \(v\)-adic valuations for finitely many \(v\).  Algebraically, these neighborhoods can thus be described as the kernel \(\Gamma(\mathfrak{a})\) of the homomorphism \(\mathbf{G}(\mathcal{O}_{k,S}) \rightarrow \mathbf{G}(\mathcal{O}_{k,S} / \mathfrak{a})\) for the nonzero ideal \(\mathfrak{a} \subset \mathcal{O}_{k,S}\) given by the product of the prime ideal powers corresponding to the minimal \(v\)-adic valuations.  The groups \(\Gamma(\mathfrak{a})\) are also known as the \emph{principal congruence subgroups} of \(\mathbf{G}(\mathcal{O}_{k,S})\) where a \emph{congruence subgroup} would be any subgroup that contains some \(\Gamma(\mathfrak{a})\).  Arguing as above, we see that the identity map on \(\mathbf{G}(\mathcal{O}_{k,S})\) extends to a canonical isomorphism of topological groups
  \[ \overline{\mathbf{G}(\mathcal{O}_{k,S})} \cong \prod_{v \notin S} \mathbf{G}(\mathcal{O}_v) \]
  where \(\overline{\mathbf{G}(\mathcal{O}_{k,S})}\) is given by the projective limit \(\varprojlim_{\mathfrak{a} \neq 0} \mathbf{G}(\mathcal{O}_{k,S} / \mathfrak{a})\).  Consequently, \(\overline{\mathbf{G}(\mathcal{O}_{k,S})}\) is called the \emph{congruence completion} of \(\mathbf{G}(\mathcal{O}_{k,S})\).  Note that from the functorial view point, \(\mathbf{G}\) preserves products.  One can therefore understand the above isomorphism more suggestively as saying that \(\mathbf{G}\) is ``continuous at the zero ideal'':
  \[ \varprojlim\nolimits_{\mathfrak{a} \neq 0} \mathbf{G}\left(\mathcal{O}_{k,S} / \mathfrak{a}\right) = \mathbf{G}\left(\varprojlim\nolimits_{\mathfrak{a} \neq 0} \mathcal{O}_{k,S} / \mathfrak{a}\right). \]

  \subsection{The congruence subgroup problem} \label{subsection:csp} The congruence subgroups show that an \(S\)-arithmetic group \(\Gamma = \mathbf{G}(\mathcal{O}_{k,S})\) comes with a wealth of finite index subgroups.  This raises the question whether they actually provide for all (or essentially all) finite index subgroups of \(\Gamma\).  To state the precise, modern version of the problem, recall that \(\widehat{\Gamma}\) denotes the \emph{profinite completion} of \(\Gamma\), the projective limit along the inverse system of all finite quotient groups of \(\Gamma\).  Since the congruence completion \(\overline{\Gamma}\) is a profinite group as well, the universal property of \(\widehat{\Gamma}\) yields a canonical homomorphism \(\pi \colon \widehat{\Gamma} \rightarrow \overline{\Gamma}\) that restricts to the identity on \(\Gamma\).  It is always an epimorphism because the image is dense, as it contains \(\Gamma\), and closed because \(\widehat{\Gamma}\) is compact and \(\overline{\Gamma}\) is Hausdorff.  The kernel \(C(k, \mathbf{G}, S)\) of \(\pi\) is known as the \emph{congruence kernel} of the \(k\)-group \(\mathbf{G}\) with respect to~\(S\).  We thus have a short exact sequence
  \[ 1 \longrightarrow C(k, \mathbf{G}, S) \longrightarrow \widehat{\Gamma} \xrightarrow{ \ \pi \ } \overline{\Gamma} \longrightarrow 1. \]
  The \emph{congruence subgroup problem} asks to determine \(C(k, \mathbf{G}, S)\), the most important question being whether the profinite group \(C(k,\mathbf{G},S)\) is actually \emph{finite} in which case we say that the \(k\)-group \(\mathbf{G}\) has the \emph{congruence subgroup property} (CSP) with respect to \(S\).

  For an absolutely almost simple simply-connected \(k\)-group \(\mathbf{G}\), a conjecture of Serre~\cite{Platonov-Rapinchuk:AlgebraicGroups}*{(9.45), p.\,556} asserts that \(C(k, \mathbf{G}, S)\) is finite if \(\rank_S \mathbf{G} \ge 2\) and \(\rank_{k_v} \mathbf{G} \ge 1\) for all finite places \(v \in S\) and is infinite if \(\rank_S \mathbf{G} = 1\).  So assuming \((k, \mathbf{G},S)\) has higher rank should precisely ensure that \(\mathbf{G}\) has CSP.  The conjecture is known for the bigger part of all possible groups~\(\mathbf{G}\), including all \(k\)-isotropic ones.  The most notorious open case occurs when \(\mathbf{G}\) is an anisotropic inner form of type \(A_n\).

 For higher rank \((k,\mathbf{G}, S)\), it is expected more precisely that \(C(k, \mathbf{G}, S)\) should be isomorphic to the group of roots of unity \(\mu_k \subset k^*\) if \(k\) is totally imaginary and should be trivial if \(k\) is not.  Raghunathan informs us in~\cite{Raghunathan:CSP}*{p.\,304} that the proof of this more precise picture is meanwhile complete for \(k\)-isotropic \(\mathbf{G}\).  So at least if \(k\) has a real place and \(\rank_k \mathbf{G} \ge 1\), we are in the convenient situation that every finite index subgroup of \(\Gamma = \mathbf{G}(\mathcal{O}_{k,S})\) is a congruence subgroup.  In this case, strong approximation and CSP combine to the elegant canonical isomorphism
  \[ \widehat{\mathbf{G}(\mathcal{O}_{k,S})} \cong \mathbf{G}(\widehat{\mathcal{O}_{k,S}}). \]
  For more information on CSP, we refer the reader to the survey article~\cite{Prasad-Rapinchuk:Developments}.
  
  \section{Proofs} \label{section:proofs}

  In this section we prove the results that were outlined in the introduction.  We start by recalling how superrigidity and strong approximation imply the commensurability classification of higher rank \(S\)-arithmetic groups in Section~\ref{subsection:commensurability-classification}.  In Section~\ref{subsection:consequences}, we show how profinite commensurability of two such groups implies similarities of the corresponding \(k\) and \(S\) by proving Theorems~\ref{thm:profcomm-implies-arith-equiv} and~\ref{thm:profcomm-implies-similar-s}, together with the accompanying Propositions~\ref{prop:arith-equiv-fields-give-profinitely-iso-groups} and~\ref{prop:specifiedbijection} which give a partial converse.  Section~\ref{subsection:profiniteinvariants} concludes with the proofs of the applications to profiniteness questions of group invariants.

  \subsection{Commensurability classification}  \label{subsection:commensurability-classification} We show that ambient classes classify commensurability classes of higher rank \(S\)-arithmetic groups.
  
\begin{proposition} \label{prop:ambientclassunique}
  Any two higher rank ambient triples \((k, \mathbf{G}, S)\) and \((l, \mathbf{H}, T)\) of a higher rank \(S\)-arithmetic group \(\Gamma\) are equivalent.
\end{proposition}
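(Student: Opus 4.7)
The plan is to combine Margulis superrigidity, which will supply algebraicity, with a comparison of \(v\)-adic boundedness, which will pin down \(S\) and \(T\).  Unwinding the definition of commensurability twice, I may pass to an abstract isomorphism \(\phi \colon \Gamma_1 \xrightarrow{\sim} \Gamma_2\) between finite index subgroups \(\Gamma_1 \subseteq \mathbf{G}(\mathcal{O}_{k,S})\) and \(\Gamma_2 \subseteq \mathbf{H}(\mathcal{O}_{l,T})\) coming from the two embeddings of a common finite index subgroup of \(\Gamma\).  By Borel density both \(\Gamma_1\) and \(\Gamma_2\) remain Zariski dense in \(\mathbf{G}\) and \(\mathbf{H}\), respectively.

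Next I invoke Margulis superrigidity for the homomorphism \(\phi \colon \Gamma_1 \to \mathbf{H}(\overline{l})\).  Since \(\Gamma_1\) is a higher rank \(S\)-arithmetic lattice in the absolutely almost simple simply-connected group \(\mathbf{G}\), with Zariski dense image in \(\mathbf{H}\), superrigidity yields a field embedding \(\sigma \colon k \hookrightarrow \overline{l}\) together with a morphism of algebraic \(\overline{l}\)-groups \(F \colon \prescript{\sigma}{}{\!\mathbf{G}} \to \mathbf{H}\) such that \(\phi\) agrees with \(F \circ \sigma\) on a finite index subgroup of \(\Gamma_1\).  Applying the same result to \(\phi^{-1}\) produces a return pair \((\sigma', F')\); composing the two and comparing on a finite index subgroup where the round trip must be the identity forces \(\sigma(k) = l\), so that \(\sigma\) is a field isomorphism \(k \xrightarrow{\sim} l\).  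Absolute almost simplicity makes \(F\) a central isogeny, and since both \(\mathbf{G}\) and \(\mathbf{H}\) are simply-connected, \(F\) must in fact be an \(l\)-isomorphism \(\prescript{\sigma}{}{\!\mathbf{G}} \xrightarrow{\sim} \mathbf{H}\).

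Via \((\sigma, F)\) the two ambient triples become \((l, \mathbf{H}, \prescript{\sigma}{}{\!S})\) and \((l, \mathbf{H}, T)\), so it remains to show \(\prescript{\sigma}{}{\!S} = T\).  Assume toward a contradiction that some finite place \(v\) lies in \(\prescript{\sigma}{}{\!S} \setminus T\) (infinite places belong to both sets).  Then \(\mathbf{H}(\mathcal{O}_{l,T}) \subseteq \mathbf{H}(\mathcal{O}_v)\) is bounded in \(\mathbf{H}(l_v)\), hence so is any finite index subgroup of it.  On the other hand \(\mathbf{H}(\mathcal{O}_{l, \prescript{\sigma}{}{\!S}})\) contains elements of arbitrarily negative \(v\)-adic valuation (one may invert a uniformizer at \(v\)), so it is unbounded in \(\mathbf{H}(l_v)\); since a finite index subgroup is a finite union of cosets, it inherits this unboundedness.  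This contradicts the existence of a common finite index subgroup of \(\mathbf{H}(\mathcal{O}_{l, \prescript{\sigma}{}{\!S}})\) and \(\mathbf{H}(\mathcal{O}_{l, T})\) that \((\sigma, F)\) inherits from the commensurability of \(\Gamma_1\) and \(\Gamma_2\).  The symmetric argument rules out a finite place in \(T \setminus \prescript{\sigma}{}{\!S}\), so \(\prescript{\sigma}{}{\!S} = T\) and the triples are equivalent.

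The main obstacle is the clean application of Margulis superrigidity in the \(S\)-arithmetic setting, in particular the simultaneous extraction of \(\sigma\) and \(F\) and the upgrade of \(\sigma\) to a field isomorphism by comparing against the return homomorphism; once this algebraic matching is in place, the identification of \(S\) with \(T\) is a routine boundedness comparison place by place.
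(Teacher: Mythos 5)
Your proof follows the paper's route almost step for step: Margulis superrigidity produces the field homomorphism and the central isogeny, simple connectedness upgrades the isogeny to an isomorphism, a symmetry (or degree) argument upgrades the field homomorphism to an isomorphism, and the comparison of \(\prescript{\sigma}{}{\!S}\) with \(T\) is reduced to showing that an \(S\)-integral point group cannot be contained with finite index in a \(T\)-integral one when \(\prescript{\sigma}{}{\!S} \neq T\).  The paper additionally records that superrigidity comes with a central twist \(\nu \colon \Gamma_0 \to \mathcal{Z}(\mathbf{H})\) which must be killed by passing to \(\ker\nu\); you absorb this into ``agrees with \(F\circ\sigma\) on a finite index subgroup,'' which is fine.

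There is, however, a genuine gap in the final boundedness comparison.  You claim that for a finite place \(v \in \prescript{\sigma}{}{\!S}\setminus T\), the group \(\mathbf{H}(\mathcal{O}_{l,\prescript{\sigma}{}{\!S}})\) is unbounded in \(\mathbf{H}(l_v)\) because ``one may invert a uniformizer at \(v\).''  That heuristic is about the \emph{ring} \(\mathcal{O}_{l,\prescript{\sigma}{}{\!S}}\), not about the \emph{group} \(\mathbf{H}(\mathcal{O}_{l,\prescript{\sigma}{}{\!S}})\): for a general simply-connected absolutely almost simple \(l\)-group \(\mathbf{H}\) there need be no \(l\)-split torus or \(l\)-rational unipotent one-parameter subgroup into which a negative power of a uniformizer can be substituted while keeping all other entries \(\prescript{\sigma}{}{\!S}\)-integral; indeed \(\mathbf{H}\) may well be \(l\)-anisotropic, as only \(l_v\)-isotropy for finite \(v \in \prescript{\sigma}{}{\!S}\) is guaranteed by the standing hypotheses.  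So unboundedness of \(\mathbf{H}(\mathcal{O}_{l,\prescript{\sigma}{}{\!S}})\) at \(v\) is true but requires an argument.  The paper obtains it from strong approximation: \(\mathbf{H}(l)\) is dense in \(\mathbf{H}(\mathbb{A}_{l,T})\) because \(H_T\) is noncompact (using \(\rank_T \mathbf{H} \ge 2\)), and this forces the diagonal image of \(\mathbf{H}(\mathcal{O}_{l,\prescript{\sigma}{}{\!S}})\) in \(\prod_{v \in \prescript{\sigma}{}{\!S}\setminus T}\mathbf{H}(l_v)\) to be dense, hence unbounded, while \(\mathbf{H}(\mathcal{O}_{l,T})\) lands in the compact \(\prod_{v\in\prescript{\sigma}{}{\!S}\setminus T}\mathbf{H}(\mathcal{O}_v)\).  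Replacing your parenthetical justification with this strong approximation argument (or another rigorous proof of unboundedness of the projection, e.g.\ via finite covolume of the \(\prescript{\sigma}{}{\!S}\)-arithmetic lattice in \(H_{\prescript{\sigma}{}{\!S}}\)) closes the gap; the rest of the proof stands.
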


  \begin{proof}
    Fix an embedding \(\Gamma \le \mathbf{G}(\overline{k})\) commensurable with \(\mathbf{G}(\mathcal{O}_{k, S})\) for some \(k\)-embedding \(\mathbf{G} \subset \mathbf{GL_n}\).  Since \((l, \mathbf{H}, T)\) is an ambient triple as well, we have another embedding \(\delta \colon \Gamma \rightarrow \mathbf{H}(\overline{l})\) commensurable with \(\mathbf{H}(\mathcal{O}_{l, T})\) for some \(k\)-embedding \(\mathbf{H} \subset \mathbf{GL_m}\).  Thus there is a finite index subgroup \(\Gamma_0 \le \Gamma \cap \mathbf{G}(\mathcal{O}_{k, S})\) which is mapped by \(\delta\) to a finite index subgroup in \(\mathbf{H}(\mathcal{O}_{l,T})\).  The group \(\delta(\Gamma_0)\) is Zariski dense in \(\mathbf{H}\) according to \cite{Margulis:DiscreteSubgroups}*{Proposition~I.3.2.10, p.\,64}.  By Margulis superrigidity \cite{Margulis:DiscreteSubgroups}*{Theorem~(C), p.\,259}, there is a homomorphism \(\sigma \colon k \rightarrow l\), an \(l\)-epimorphism \(\eta \colon \!\!\prescript{\sigma}{}{\!\mathbf{G}} \rightarrow \mathbf{H}\), and a homomorphism \(\nu \colon \Gamma_0 \rightarrow \mathcal{Z}(\mathbf{H})\) to the (finite) center of \(\mathbf{H}\) such that \(\delta(\gamma) = \nu(\gamma) \cdot \eta(\sigma^0(\gamma))\) for all \(\gamma \in \Gamma_0\).  Here \(\sigma^0 \colon \mathbf{G}(k) \rightarrow \!\!\prescript{\sigma}{}{\!\mathbf{G}(l)}\) is the group homomorphism induced by \(\sigma\).  The subgroup \(\Gamma_1 = \ker \nu\) still has finite index in \(\Gamma\), and we have \(\delta(\gamma) = \eta(\sigma^0(\gamma))\) for all \(\gamma \in \Gamma_1\).  As we can swap the roles of \(\Gamma\) and \(\Lambda\), there is also a homomorphism \(l \rightarrow k\), showing that \(\sigma\) is a field isomorphism.  Moreover, since \(\mathbf{H}\) is simply-connected, the central \(l\)-isogeny \(\eta\) is in fact an \(l\)-isomorphism.  

    It remains to show that \(\prescript{\sigma}{}{\!S} = T\).  To this end, note that according to \cite{Margulis:DiscreteSubgroups}*{Lemma~I.3.1.1.(iv), p.\,60}, the group \(\eta \circ \sigma^0(\mathbf{G}(\mathcal{O}_{k, S})) = \eta(\prescript{\sigma}{}{\!\mathbf{G}}(\mathcal{O}_{l, \prescript{\sigma}{}{\!S}}))\) is commensurable with \(\mathbf{H}(\mathcal{O}_{l, \prescript{\sigma}{}{\!S}})\).  As the homomorphism \(\eta \circ \sigma^0\) restricts to \(\delta\) on \(\Gamma_1\), we have that \(\mathbf{H}(\mathcal{O}_{l, \prescript{\sigma}{}{\!S}})\) is commensurable with \(\mathbf{H}(\mathcal{O}_{l, T})\).  This says that the intersection \(\mathbf{H}(\mathcal{O}_{l, \prescript{\sigma}{}{\!S}}) \cap \mathbf{H}(\mathcal{O}_{l, T}) = \mathbf{H}(\mathcal{O}_{l, \prescript{\sigma}{}{\!S} \cap T})\) has finite index both in \(\mathbf{H}(\mathcal{O}_{l, \prescript{\sigma}{}{\!S}})\) and in \(\mathbf{H}(\mathcal{O}_{l, T})\).  Therefore the proof is complete once we show that for general \(S_1 \subseteq S_2\) satisfying our assumptions, the group \(\mathbf{H}(\mathcal{O}_{l, S_1})\) has infinite index in \(\mathbf{H}(\mathcal{O}_{l, S_2})\) unless \(S_1 = S_2\).  So suppose \(S_2 \setminus S_1 \neq \emptyset\).  Then \(H_{S_2 \setminus S_1} = \prod_{v \in S_2 \setminus S_1} \mathbf{H}(l_v)\) is a nontrivial product of noncompact \(p\)-adic Lie groups because we assume that \(\mathbf{H}\) is \(l_v\)-isotropic for all finite places \(v \in S_2\).  The assumption \(\rank_{S_1} \mathbf{H} \ge 2\) says in particular that \(H_{S_1}\) is noncompact which, as discussed in Section~\ref{subsection:strongapproximation}, is equivalent to \(\mathbf{H}\) having the strong approximation property with respect to \(S_1\).  The density of \(\mathbf{H}(l)\) in \(\mathbf{H}(\mathbb{A}_{l, S_1})\) clearly implies that the diagonal embedding of \(\mathbf{H}(\mathcal{O}_{l, S_2})\) into \(H_{S_2 \setminus S_1}\) is dense.  On the other hand, \(\mathbf{H}(\mathcal{O}_{l, S_1})\) lies in the subgroup \(K_{S_2 \setminus S_1} = \prod_{v \in S_2 \setminus S_1} \mathbf{H}(\mathcal{O}_v)\) of \(H_{S_2 \setminus S_1}\) which is compact hence closed because \(H_{S_2 \setminus S_1}\) is Hausdorff.  Since a compact subgroup of a noncompact group has infinite index, we conclude
    \[ [\mathbf{H}(\mathcal{O}_{l, S_2}) : \mathbf{H}(\mathcal{O}_{l, S_1})] \ge [H_{S_2 \setminus S_1} : \overline{\mathbf{H}(\mathcal{O}_{l, S_1})}] \ge [H_{S_2 \setminus S_1} : K_{S_2 \setminus S_1}] = \infty. \qedhere \]
  \end{proof}
  
    If the embeddings \(\Gamma \rightarrow \mathbf{G}(\overline{k})\) and \(\Gamma \rightarrow \mathbf{H}(\overline{l})\) are specified, then Margulis superrigidity says additionally that the isomorphism \(\sigma \colon k \rightarrow l\) and the \(l\)-isomorphism \(\eta \colon \!\!\prescript{\sigma}{}{\!\mathbf{G}} \rightarrow \mathbf{H}\) exhibiting the equivalence of ambient triples are unique with the property that the diagram
      \[
      \begin{xy}
        \xymatrix{
          \mathbf{G}(k) \ar[r]^{\sigma^0}_\cong & \prescript{\sigma}{}{\!\mathbf{G}(l)} \ar[d]^\eta_\cong \\
          \Gamma_1 \ar@{^{(}->}[u] \ar@{^{(}->}[r] & \mathbf{H}(l)
        }
      \end{xy}
      \]
      commutes for a finite index subgroup \(\Gamma_1 \le \Gamma\).  So it is fair to say that the ambient triple of a higher rank \(S\)-arithmetic group is unique up to unique equivalence.  The above proposition shows in fact that ambient classes classify higher rank \(S\)-arithmetic groups up to abstract commensurability.

      \begin{corollary} \label{cor:commensurabilityclassification}
        Let \(\Gamma\) and \(\Lambda\) be higher rank \(S\)-arithmetic groups with higher rank ambient triples \((k, \mathbf{G}, S)\) and \((l, \mathbf{H}, T)\).  Then \(\Gamma\) and \(\Lambda\) are abstractly commensurable if and only if \([k, \mathbf{G}, S] = [l, \mathbf{H}, T]\).
      \end{corollary}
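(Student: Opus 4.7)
The plan is to deduce both directions from Proposition~\ref{prop:ambientclassunique}, together with the elementary observation that passing to a finite index subgroup preserves the ambient triple. Indeed, if $\Gamma' \le \Gamma$ has finite index, then $\Gamma' \cap \mathbf{G}(\mathcal{O}_{k,S})$ has finite index both in $\Gamma'$ and in $\mathbf{G}(\mathcal{O}_{k,S})$, so $(k, \mathbf{G}, S)$ remains ambient for $\Gamma'$.

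For the ``only if'' direction, I would fix an isomorphism $\varphi \colon \Gamma' \xrightarrow{\cong} \Lambda'$ between finite index subgroups witnessing commensurability of $\Gamma$ and $\Lambda$. By the preceding remark, $(k, \mathbf{G}, S)$ is an ambient triple of $\Gamma'$, and transporting $(l, \mathbf{H}, T)$ through $\varphi^{-1}$ makes it an ambient triple of $\Gamma' \cong \Lambda'$ as well. Then Proposition~\ref{prop:ambientclassunique} applied to the single group $\Gamma'$ gives $[k, \mathbf{G}, S] = [l, \mathbf{H}, T]$ immediately.

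For the ``if'' direction, I would pick a representative of the equivalence, that is, a field isomorphism $\sigma \colon k \to l$ with $\prescript{\sigma}{}{\!S} = T$ and an $l$-isomorphism $\eta \colon \prescript{\sigma}{}{\!\mathbf{G}} \to \mathbf{H}$. Applying $\sigma$ entrywise yields a group isomorphism $\sigma^0 \colon \mathbf{G}(\mathcal{O}_{k,S}) \xrightarrow{\cong} \prescript{\sigma}{}{\!\mathbf{G}}(\mathcal{O}_{l,T})$, and the $l$-isomorphism $\eta$ restricts to a commensuration of $\prescript{\sigma}{}{\!\mathbf{G}}(\mathcal{O}_{l,T})$ with $\mathbf{H}(\mathcal{O}_{l,T})$ by \cite{Margulis:DiscreteSubgroups}*{Lemma~I.3.1.1.(iv)}, precisely as already invoked in the proof of Proposition~\ref{prop:ambientclassunique}. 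Chaining these two maps, the standard $S$-arithmetic groups $\mathbf{G}(\mathcal{O}_{k,S})$ and $\mathbf{H}(\mathcal{O}_{l,T})$ are abstractly commensurable, and since $\Gamma$ and $\Lambda$ are commensurable with these two groups by definition of ambient triple, they are commensurable with each other.

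Since Proposition~\ref{prop:ambientclassunique} has already done the heavy lifting, I do not expect any real obstacle; the corollary is essentially bookkeeping. The only point one might be tempted to worry about is that the algebraic $l$-isomorphism $\eta$ need not match $\mathcal{O}_{l,T}$-structures exactly, but the cited Margulis lemma precisely asserts that it preserves these structures up to commensurability, which is all that is needed.
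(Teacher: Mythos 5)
Your proof is correct and follows the same route as the paper: both directions reduce to Proposition~\ref{prop:ambientclassunique} via transport of ambient triples through the commensurability isomorphism, and the ``if'' direction invokes the same Margulis lemma to pass from the algebraic $l$-isomorphism $\eta$ to a commensuration of integral points.
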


      \begin{proof}
        If \([k, \mathbf{G}, S] = [l, \mathbf{H}, T]\), then there are isomorphisms \(\sigma \colon k \rightarrow l\) and \(\eta \colon \!\!\prescript{\sigma}{}{\!\mathbf{G}} \rightarrow \mathbf{H}\) such that \(\prescript{\sigma}{}{\!S} = T\).  Thus \(\Gamma\) is commensurable with \(\mathbf{G}(\mathcal{O}_{k, S})\) which is isomorphic to \(\eta(\prescript{\sigma}{}{\!\mathbf{G}(\mathcal{O}_{l, T})})\) which is commensurable with \(\mathbf{H}(\mathcal{O}_{l, T})\) which is commensurable with \(\Lambda\).  Thus \(\Gamma\) is abstractly commensurable with \(\Lambda\).

          Conversely, let \(\delta \colon \Gamma_1 \rightarrow \Lambda_1\) be an isomorphism where \([\Gamma : \Gamma_1] < \infty\) and \([\Lambda : \Lambda_1] < \infty\).  Then \(\delta\) yields an embedding of \(\Gamma_1\) into \(\mathbf{H}(\overline{l})\) commensurable with \(\mathbf{H}(\mathcal{O}_{l, T})\).  From Proposition~\ref{prop:ambientclassunique} we conclude \([k, \mathbf{G}, S] = [l, \mathbf{H}, T]\).
      \end{proof}

      Finally, we show that commensurability is a stronger equivalence relation than profinite commensurability  for the groups of interest.

      \begin{proposition} \label{prop:commimpliesprofcomm}
        Let \(\Gamma\) and \(\Lambda\) be commensurable residually finite groups.  Then \(\Gamma\) and \(\Lambda\) are profinitely commensurable.
      \end{proposition}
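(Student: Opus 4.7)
The plan is to reduce the statement to the fact that, for any residually finite group \(\Gamma\) with a finite index subgroup \(\Gamma_0 \le \Gamma\), the inclusion induces an embedding \(\widehat{\Gamma_0} \hookrightarrow \widehat{\Gamma}\) onto a subgroup of index \([\Gamma:\Gamma_0]\).  Granting this, the proof finishes quickly: by commensurability we can choose finite index subgroups \(\Gamma_0 \le \Gamma\) and \(\Lambda_0 \le \Lambda\) together with an isomorphism \(\varphi \colon \Gamma_0 \xrightarrow{\cong} \Lambda_0\); functoriality of profinite completion yields \(\widehat{\varphi}\colon \widehat{\Gamma_0} \xrightarrow{\cong} \widehat{\Lambda_0}\); and the embedding claim, applied to both \((\Gamma, \Gamma_0)\) and \((\Lambda, \Lambda_0)\), identifies the two sides with finite index subgroups of \(\widehat{\Gamma}\) and \(\widehat{\Lambda}\), respectively.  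Hence \(\widehat{\Gamma}\) and \(\widehat{\Lambda}\) share isomorphic finite index subgroups and are therefore commensurable.

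To establish the embedding, first note that residual finiteness passes from \(\Gamma\) to \(\Gamma_0\), so both inject into their profinite completions, and the chain \(\Gamma_0 \hookrightarrow \Gamma \hookrightarrow \widehat{\Gamma}\) factors through \(\widehat{\Gamma_0}\) via the universal property, yielding a continuous homomorphism \(\widehat{\Gamma_0} \to \widehat{\Gamma}\) whose image is the closure \(\overline{\Gamma_0}\) of \(\Gamma_0\) in \(\widehat{\Gamma}\).  For injectivity I would compare two topologies on \(\Gamma_0\): its own profinite topology, with basis the finite index subgroups \(K \le \Gamma_0\), and the subspace topology from \(\widehat{\Gamma}\), generated by traces \(H \cap \Gamma_0\) of finite index subgroups \(H \le \Gamma\).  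The subspace topology is manifestly coarser, so what matters is that every basic open \(K\) is open in the subspace topology; given such \(K\), its normal core \(N = \bigcap_{g \in \Gamma} g K g^{-1}\) has index at most \([\Gamma:K]!\) in \(\Gamma\), and \(N \subseteq K\) exhibits \(K\) as open.  Therefore the two topologies agree and \(\widehat{\Gamma_0} \to \widehat{\Gamma}\) is an embedding.

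To compute \([\widehat{\Gamma} : \overline{\Gamma_0}]\) I would decompose \(\Gamma = \bigsqcup_{i=1}^n g_i \Gamma_0\) with \(n = [\Gamma : \Gamma_0]\); taking closures gives \(\widehat{\Gamma} = \bigcup_{i=1}^n g_i \overline{\Gamma_0}\), and these cosets are disjoint because \(\overline{\Gamma_0} \cap \Gamma = \Gamma_0\), which in turn holds since \(\Gamma_0\) is closed — indeed open — in the profinite topology on \(\Gamma\).  The step I expect to need the most care is verifying that the profinite topology on \(\Gamma_0\) agrees with its subspace topology from \(\widehat{\Gamma}\), but the normal core argument above handles it directly; finite generation of \(\Gamma\) and \(\Lambda\) plays no essential role beyond the standing hypothesis.
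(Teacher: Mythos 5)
Your proposal is correct and follows essentially the same route as the paper: both rest on the fact that the closure of a finite-index subgroup $H$ in $\widehat{\Gamma}$ is an open (hence finite-index) subgroup isomorphic to $\widehat{H}$, which comes down to the profinite topology of $\Gamma$ inducing the full profinite topology on $H$. The only difference is presentational — the paper cites Ribes--Zalesskii for openness and asserts the topology statement, while you spell out the normal-core argument and the coset decomposition directly.
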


      \begin{proof}
        Let \(H\) be a group which has finite index embeddings into \(\Gamma\) and \(\Lambda\).  Then the closures \(\overline{H}^{\widehat{\Gamma}}\) and \(\overline{H}^{\widehat{\Lambda}}\) are open in \(\widehat{\Gamma}\) and \(\widehat{\Lambda}\) according to \cite{Ribes-Zalesskii:ProfiniteGroups}*{Proposition~3.2.2, p.\,84}.  In addition, we have \(\overline{H}^{\widehat{\Gamma}} \cong \widehat{H} \cong \overline{H}^{\widehat{\Lambda}}\) because the profinite topologies on \(\Gamma\) and \(\Lambda\) evidently induce the full profinite topology on any finite index subgroup, compare~\cite{Reid:profinite-properties}*{Corollary~4.7\,(1)}.
      \end{proof}

      \subsection{Consequences of profinite commensurability} \label{subsection:consequences}

      As we just saw, the commensurability class of a higher rank \(S\)-arithmetic group \(\Gamma\) can be identified with an ambient class \([k, \mathbf{G}, S]\).  Aka's main result~\cite{Aka:Arithmetic}*{Theorem~3}, translated to our setting, says that the set of all higher rank \(S\)-arithmetic groups \(\Lambda\) with \(\widehat{\Lambda} \cong \widehat{\Gamma}\) lies in a finite union of ambient classes.  The key proposition from which the theorem is concluded will also be the basis of our further investigations.  To state it, let \(\mathfrak{g}_p\) be the Lie algebra of the \(p\)-adic Lie group
  \[ \mathbf{G}_p = \prod_{v \mid p, v \notin S} \mathbf{G}(k_v). \]
  We call \(\mathfrak{g}_p\) the \emph{\(p\)-algebra} of \(\mathbf{G}\) (or of \(\Gamma\)).  Clearly, it is well-defined up to \(\Q_p\)-isomorphism. 
  
   \begin{proposition}[\cite{Aka:Arithmetic}*{Proposition~14}] \label{prop:completiondeterminespalgebras}
     Profinitely commensurable higher rank \(S\)-arithmetic groups have isomorphic \(p\)-algebras.
   \end{proposition}

    Actually, the referenced result assumes the groups are profinitely isomorphic.  But since \(\Gamma\) is residually finite, every open subgroup \(U \le \widehat{\Gamma}\) is given by the closure \(U = \overline{\Gamma_0} \le \widehat{\Gamma}\) of a unique finite index subgroups \(\Gamma_0 \le \Gamma\) and \(\overline{\Gamma_0} \cong \widehat{\Gamma_0}\).  So the given formulation is immediate. One sentence on the proof of~\cite{Aka:Arithmetic}*{Proposition~14}: apply Margulis superrigidity to show that \(\mathfrak{g}_p\) is isomorphic to the Lie algebra of a maximal \(p\)-adic analytic quotient of \(\widehat{\Gamma}\).  To prepare the proof of Theorem~\ref{thm:profcomm-implies-arith-equiv}, it is helpful to recall from~\cite{Aka:Arithmetic} how to extract information on \(k\), \(\mathbf{G}\) and \(S\) from the algebras \(\mathfrak{g}_p\).  One first verifies that the simple ideals of \(\mathfrak{g}_p\) are precisely the Lie algebras \(\lie_{\Q_p} \mathbf{G}(k_v)\) of the factors \(\mathbf{G}(k_v)\) for \(v \mid p\) and \(v \notin S\).  Hence the number of simple ideals of \(\mathfrak{g}_p\) is maximal if and only if \(p\) is split and \emph{\(S\)-unrelated}, meaning \(p\) does not lie below any place in \(S\).  This notion is well-defined because \(\sigma \in \Gal(\overline{\Q}/\Q)\) permutes the places of the Galois closure \(N\) of \(k\) lying over \(p\) so that \(S\) and \(\prescript{\sigma}{}{\!S}\) have the same unrelated primes.  Primes that are split and \(S\)-unrelated always exist because a prime \(p\) splits in \(k\) if and only if it splits in \(N\)~\cite{Neukirch:Zahlentheorie}*{Aufgabe~I.9.4, p.\,62}, hence infinitely many primes \(p\) are split in \(k\) by Chebotarev's density theorem.  Thus the maximal number of simple ideals occurring among the \(\mathfrak{g}_p\) equals \([k : \Q]\).  For every prime \(p\), the dimension of \(\mathfrak{g}_p\) is given by
   \[ \dim_{\Q_p} \mathfrak{g}_p = \dim \mathbf{G} \cdot \sum_{v \mid p, v \notin S} [k_v : \Q_p]. \]
   Here \(\dim \mathbf{G} = \dim_\C \mathbf{G}(\C)\) is the absolute dimension of \(\mathbf{G}\).  Hence \(\dim_{\Q_p} \mathfrak{g}_p\) takes on the maximal value \(\dim \mathbf{G} \cdot [k : \Q]\) if and only if \(p\) is \(S\)-unrelated.  This discussion allows the following conclusion.

  \begin{corollary}[\cite{Aka:Arithmetic}*{Corollary~16}]
    The numbers \([k : \Q]\) and \(\dim \mathbf{G}\) as well as the set of all \(S\)-unrelated primes depend only on the profinite commensurability class of a higher rank \(S\)-arithmetic group \(\Gamma\).
  \end{corollary}

  The observation that \(p\) is split and \(S\)-unrelated if and only if \(\mathfrak{g}_p\) has maximal number of ideals also shows that \(\widehat{\Gamma}\) identifies almost all rational primes that split in \(k\).  As mentioned above, a rational prime splits in \(k\) if and only if it splits in the Galois closure \(N\).  Since a Galois extension is uniquely determined by any finitely complemented subset of the set of split primes~\cite{Neukirch:Zahlentheorie}*{Satz~13.9, p.\,572}, the profinite commensurability class of \(\widehat{\Gamma}\) also determines the Galois closure \(N\) of \(k\)~\cite{Aka:Arithmetic}*{Corollary~16\,(1)}.  In view of Theorem~\ref{thm:consequences-arith-equiv}\,\eqref{item:same-galois-closure}, we now prove a sharpening of this result.

  \begin{proof}[Proof of Theorem~\ref{thm:profcomm-implies-arith-equiv}]
    We have a 1-1 correspondence from the set \(\{ \mathfrak{h}_1, \ldots, \mathfrak{h}_r \}\) of simple ideals of \(\mathfrak{g}_p\) to the set of places above \(p\) and outside \(S\) which sends an ideal \(\mathfrak{h}_i\) to a place \(v_i \mid p\) satisfying \(\mathfrak{h}_i \cong_{\Q_p} \lie_{\Q_p} \mathbf{G}(k_{v_i})\).  The place \(v_i\) determines the dimension of \(\mathfrak{h}_i\) by
    \[ \dim_{\Q_p} \mathfrak{h}_i = \dim_{\Q_p} \lie_{\Q_p} \mathbf{G}(k_{v_i}) = \dim \mathbf{G} \cdot [k_{v_i} \colon \Q_p] = \dim \mathbf{G} \cdot e_{v_i} \cdot f_{v_i} \]
    where \(e_{v_i}\) and \(f_{v_i}\) denote ramification index and residue degree of \(v_i\).  All but finitely many prime numbers \(p\) are both unramified in \(k\) and \(S\)-unrelated.  Hence in almost all cases the unordered tuple
    \[ A_p = \left\{ \frac{\dim_{\Q_p} \mathfrak{h_1}}{\dim \mathbf{G}}, \ldots , \frac{\dim_{\Q_p} \mathfrak{h_r}}{\dim \mathbf{G}} \right\} \]
    gives the decomposition type of \(p\) in \(k\).  
  \end{proof}

  Next we show that any pair of arithmetically equivalent fields leads to examples of profinitely commensurable higher rank \(S\)-arithmetic groups.
  
  \begin{proposition} \label{prop:arith-equiv-fields-give-profinitely-iso-groups}
    For arithmetically equivalent fields \(k\) and \(l\), let \(S\) and \(T\) contain precisely all infinite places and all places over ramified primes.  Then \(\mathbf{SL_3}(\mathcal{O}_{k, S})\) and \(\mathbf{SL_3}(\mathcal{O}_{l, T})\) are profinitely commensurable.
  \end{proposition}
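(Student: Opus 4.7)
The plan is to identify the congruence completions of $\Gamma = \mathbf{SL_3}(\mathcal{O}_{k,S})$ and $\Lambda = \mathbf{SL_3}(\mathcal{O}_{l,T})$ and then lift the resulting isomorphism to abstract commensurability of profinite completions. The lift is available because $\mathbf{SL_3}$ is $k$-split, hence $k$-isotropic, and likewise over $l$, so by the results surveyed in Section~\ref{subsection:csp} both triples have CSP and fit into short exact sequences
\[
 1 \to C(k,\mathbf{SL_3},S) \to \widehat{\Gamma} \to \overline{\Gamma} \to 1, \qquad 1 \to C(l,\mathbf{SL_3},T) \to \widehat{\Lambda} \to \overline{\Lambda} \to 1
\]
with finite congruence kernels.

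For the first step, I would exploit that since $S$ consists precisely of the infinite places and the places above the ramified rational primes, the set $\{v \notin S\}$ is exactly the set of finite places of $k$ above \emph{unramified} rational primes, and likewise for $T$. Arithmetic equivalence of $k$ and $l$ yields $A_k(p) = A_l(p)$ for \emph{every} rational prime $p$ by Theorem~\ref{thm:characterizations-arith-equiv}\,\eqref{item:all-decomp-types-equal}, so for each unramified $p$ there is a residue-degree-preserving bijection between places of $k$ above $p$ and places of $l$ above $p$. Since an unramified extension of $\Q_p$ is determined up to $\Q_p$-isomorphism by its residue degree, this upgrades to a bijection $\varphi \colon \{v \notin S\} \to \{w \notin T\}$ with $k_v \cong_{\Q_p} l_{\varphi(v)}$, and therefore $\mathbf{SL_3}(\mathcal{O}_v) \cong \mathbf{SL_3}(\mathcal{O}_{\varphi(v)})$ as topological groups. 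Taking the product and invoking the description of the congruence completion from Section~\ref{subsection:strongapproximation} produces the desired isomorphism $\overline{\Gamma} \cong \overline{\Lambda}$.

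To pass from this to profinite commensurability, I would use the standard profinite-topology fact that in any profinite group a finite normal subgroup can be avoided by an open normal subgroup (pick open normal subgroups missing each of the finitely many nontrivial elements and intersect). This produces open normal subgroups $N \le \widehat{\Gamma}$ and $M \le \widehat{\Lambda}$ that map isomorphically onto open subgroups of $\overline{\Gamma}$ and $\overline{\Lambda}$, respectively. Transporting the image of $M$ to $\overline{\Gamma}$ via $\overline{\Gamma} \cong \overline{\Lambda}$ and intersecting with the image of $N$ gives a common open subgroup $W$; its preimages in $\widehat{\Gamma}$ and $\widehat{\Lambda}$ are then isomorphic open, hence finite-index, subgroups, establishing profinite commensurability.

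The main point to articulate clearly is \emph{why} arithmetic equivalence suffices to match the local factors $\mathbf{SL_3}(\mathcal{O}_v)$: the argument hinges on the combination of (a)~excluding precisely the ramified places, so that every remaining $k_v$ is an unramified $\Q_p$-extension, and (b)~the rigidity of such extensions, which are classified up to $\Q_p$-isomorphism by residue degree alone. Without (a), the local completion $k_v$ would also depend on the ramification index, a datum which arithmetic equivalence does not control; this is precisely the gap between arithmetic and adelic equivalence recorded in Example~\ref{example:arith-equiv-fields}.
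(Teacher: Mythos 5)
Your argument is correct and follows essentially the same approach as the paper: identify the congruence completions via the residue-degree-preserving bijection coming from arithmetic equivalence (using that unramified local fields are classified by residue degree), then pass to the profinite completions using CSP. The only difference is that you treat the finite congruence kernel uniformly by the open-normal-subgroup-avoidance argument, whereas the paper splits into cases --- when $k$ is not totally imaginary the kernel is trivial and it concludes the groups are even profinitely \emph{isomorphic}, reserving the kernel-avoidance step for the totally imaginary case.
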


  The proof will additionally reveal that the groups are profinitely isomorphic whenever \(k\) and \(l\) are not totally imaginary.
  
  \begin{proof}
    Since \(k\) and \(l\) are arithmetically equivalent, Theorem~\ref{thm:characterizations-arith-equiv}\,\eqref{item:all-decomp-types-equal} says that for each rational prime \(p\) there exists a residue degree preserving bijection \(\varphi_p
    \) from the places \(v \mid p\) of \(k\) to the places \(w \mid p\) of \(l\).  Whenever \(p\) is unramified in~\(k\) (equivalently in~\(l\), Theorem~\ref{thm:consequences-arith-equiv}\,\eqref{item:same-discriminant}), we have \(k_v \cong l_{\varphi_p(v)}\) for all \(v \mid p\).  Indeed, an unramified extension of \(\Q_p\) is uniquely determined by the residue field \cite{Lang:AlgebraicNumberTheory}*{Proposition~II.\S4.9, p.\,49} which in both cases is the cyclic extension of \(\Z / p\) with degree \(f_v\).   We thus also have \(\mathcal{O}_v \cong \mathcal{O}_{\varphi_p(v)}\) for the valuation rings.  Let us now first assume that \(k\) (equivalently~\(l\), Theorem~\ref{thm:consequences-arith-equiv}\,\eqref{item:same-signatures}) is not totally imaginary.  Then according to the discussion in Section~\ref{subsection:csp}, the congruence kernels \(C(k, \mathbf{SL_3}, S)\) and \(C(l, \mathbf{SL_3}, T)\) are trivial.  Therefore the isomorphisms described in Sections~\ref{subsection:strongapproximation} and~\ref{subsection:csp} together with the bijections \(\varphi_p\) assemble to an isomorphism
    \[ \widehat{\mathbf{SL_3}(\mathcal{O}_{k,S})} \cong \prod_{v \notin S} \mathbf{SL_3}(\mathcal{O}_v) \cong \prod_{w \notin T} \mathbf{SL_3}(\mathcal{O}_w) \cong \widehat{\mathbf{SL_3}(\mathcal{O}_{l,T})}. \]
    If \(k\) is totally imaginary, we explained in Section~\ref{subsection:csp} that we have a short exact sequence
    \[ 1 \longrightarrow \mu_k \longrightarrow \widehat{\mathbf{SL_3}(\mathcal{O}_{k,S})} \xrightarrow{ \ \pi_k \ } \prod_{v \notin S} \mathbf{SL_3}(\mathcal{O}_v) \longrightarrow 1 \]
    and similarly for \(l\) and \(T\).  In a profinite group, the intersection of all open normal subgroups is trivial.  Thus we can find an open, hence finite index subgroup \(U \subset \widehat{\mathbf{SL_3}(\mathcal{O}_{k,S})}\) which intersects the finite group \(\mu_k\) trivially.  This has the effect that \(\pi_k\) embeds \(U\) into \(\prod_{v \notin S} \mathbf{SL_3}(\mathcal{O}_v)\) as a finite index subgroup by surjectivity.  The group \(\pi_k(U)\) is compact, thus closed because profinite groups are Hausdorff.  Therefore \(\pi_k(U)\), being closed and of finite index, is open in \(\prod_{v \notin S} \mathbf{SL_3}(\mathcal{O}_v)\).  Repeating the same construction for \(l\) and \(T\), we obtain an open subgroup \(\pi_l(V)\) in \(\prod_{w \notin T} \mathbf{SL_3}(\mathcal{O}_w)\).  Identifying the two products by the bijections \(\varphi_p\) as above, we can form the intersection \(\pi_k(U) \cap \pi_l(V)\) which is still an open subgroup.  Thus the preimages of this intersection under the restrictions \({\pi_k}_{|U}\) and \({\pi_l}_{|V}\) are isomorphic open subgroups of \(\widehat{\mathbf{SL_3}(\mathcal{O}_{k,S})}\) and \(\widehat{\mathbf{SL_3}(\mathcal{O}_{l,T})}\).
  \end{proof}

  Theorem~\ref{thm:profcomm-implies-arith-equiv} implies that the profinite commensurability class determines the set of places \(S\) of an \(S\)-arithmetic group, at least above unramified primes:

  \begin{proof}[Proof of Theorem~\ref{thm:profcomm-implies-similar-s}]
    By Theorem~\ref{thm:profcomm-implies-arith-equiv} and Theorem~\ref{thm:characterizations-arith-equiv}\,\eqref{item:all-decomp-types-equal}, all decomposition types of all rational primes \(p\) are equal in \(k\) and \(l\).  For an unramified prime \(p\), the dimensions of the simple ideals in \(\mathfrak{g}_p\) yield the residue degrees of the places lying over \(p\) and outside \(S\) as in the proof of Theorem~\ref{thm:profcomm-implies-arith-equiv}.  Thus, also the residue degrees of the places in \(S_p\) are determined by \(\mathfrak{g}_p\).
  \end{proof}

  The next proposition says that Theorem~\ref{thm:profcomm-implies-similar-s} cannot be improved, except possibly by a statement about places over ramified primes.
  
  \begin{proposition} \label{prop:specifiedbijection}
    Let \(k\) and \(l\) be arithmetically equivalent fields.  For finitely many unramified \(p\), let \(S_p\) and \(T_p\) be sets of places in \(k\) and \(l\) above \(p\) such that there are residue degree preserving bijections \(S_p \rightarrow T_p\).  If \(S\) and \(T\) consist of all the sets \(S_p\) and \(T_p\), all infinite places, and all places over ramified primes, then \(\mathbf{SL_3}(\mathcal{O}_{k,S})\) and \(\mathbf{SL_3}(\mathcal{O}_{l, T})\) are profinitely commensurable.
  \end{proposition}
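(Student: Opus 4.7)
The plan is to adapt the argument of Proposition~\ref{prop:arith-equiv-fields-give-profinitely-iso-groups} by incorporating the data of the specified bijections $S_p \to T_p$. The key observation is that once one controls the places above each rational prime $p$ that remain \emph{outside} $S$ (respectively $T$), the congruence completions become isomorphic and the CSP discussion of Section~\ref{subsection:csp} takes care of the rest.

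First, I would verify that for every rational prime $p$ there is a residue degree preserving bijection between the set of places of $k$ above $p$ outside $S$ and the set of places of $l$ above $p$ outside $T$. If $p$ is ramified, both sets are empty by hypothesis. If $p$ is unramified but not among the finitely many specified primes, then $S_p$ and $T_p$ are both empty and the full decomposition types $A_k(p) = A_l(p)$ (Theorem~\ref{thm:characterizations-arith-equiv}\,\eqref{item:all-decomp-types-equal}) provide such a bijection directly. If $p$ is among the finitely many specified unramified primes, subtracting the residue degree multiset $\{f(v) : v \in S_p\}$ from $A_k(p)$ yields the same multiset as subtracting $\{f(w) : w \in T_p\}$ from $A_l(p)$, since the bijection $S_p \to T_p$ is residue degree preserving and $A_k(p) = A_l(p)$. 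Hence a residue degree preserving bijection between the complements exists as well.

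Next, for every pair of matched places $v \notin S$, $w \notin T$ above an unramified $p$, the local fields $k_v$ and $l_w$ are unramified extensions of $\Q_p$ of equal residue degree, hence isomorphic over $\Q_p$ \cite{Lang:AlgebraicNumberTheory}*{Proposition~II.\S4.9, p.\,49}, and therefore their valuation rings $\mathcal{O}_v$ and $\mathcal{O}_w$ are isomorphic. Collating all these local isomorphisms over all primes $p$ gives a topological group isomorphism
\[ \prod_{v \notin S} \mathbf{SL_3}(\mathcal{O}_v) \;\cong\; \prod_{w \notin T} \mathbf{SL_3}(\mathcal{O}_w). \]

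Finally, I would close the argument exactly as in the proof of Proposition~\ref{prop:arith-equiv-fields-give-profinitely-iso-groups}, distinguishing whether $k$ (equivalently $l$, by Theorem~\ref{thm:consequences-arith-equiv}\,\eqref{item:same-signatures}) is totally imaginary. In the non-totally-imaginary case, the congruence kernels vanish, so the displayed isomorphism of congruence completions upgrades to an isomorphism $\widehat{\mathbf{SL_3}(\mathcal{O}_{k,S})} \cong \widehat{\mathbf{SL_3}(\mathcal{O}_{l,T})}$. In the totally imaginary case, the congruence kernels are the finite groups $\mu_k \cong \mu_l$, and one chooses open subgroups $U$ and $V$ in the two profinite completions intersecting the congruence kernels trivially; their images under $\pi_k$ and $\pi_l$ are open, and intersecting these open subgroups inside the identified product recovers profinitely isomorphic finite index subgroups. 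The only step that requires care is the bookkeeping in the first paragraph, namely checking that the residue degree preserving bijections on complements of $S_p$ and $T_p$ genuinely exist; once that combinatorial input is in place, the rest of the argument is a direct reprise of the earlier proposition.
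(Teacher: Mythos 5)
Your proof is correct and follows essentially the same route as the paper: reduce to comparing, prime by unramified prime, the residue degrees of the places outside $S$ and outside $T$, observe that these match since $A_k(p)=A_l(p)$ and the given bijections $S_p\to T_p$ preserve residue degrees, deduce local (and hence valuation-ring) isomorphisms, and then close with the same CSP dichotomy as in Proposition~\ref{prop:arith-equiv-fields-give-profinitely-iso-groups}. The only difference is that you spell out the multiset bookkeeping that the paper compresses into ``similarly as in Proposition~\ref{prop:arith-equiv-fields-give-profinitely-iso-groups}''.
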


  \begin{proof}
    By default, let us set \(S_p = T_p = \emptyset\) for the remaining \(p\) and let us denote the discriminant of \(k\) by \(\Delta_k\).  Assume first that \(k\) is not totally imaginary.  Incorporating Theorem~\ref{thm:consequences-arith-equiv}\,\eqref{item:same-discriminant}, we have similarly as in Proposition~\ref{prop:arith-equiv-fields-give-profinitely-iso-groups}
    \[ \widehat{\mathbf{SL_3}(\mathcal{O}_{k,S})} \cong \prod_{\substack{p \nmid \infty\\ p \nmid \Delta_k}} \prod_{v \notin S_p} \mathbf{SL_3}(\mathcal{O}_v) \cong \prod_{\substack{p \nmid \infty\\ p \nmid \Delta_l}} \prod_{w \notin T_p} \mathbf{SL_3}(\mathcal{O}_w) \cong \widehat{\mathbf{SL_3}(\mathcal{O}_{l,T})}. \]
    If \(k\) is totally imaginary, the same construction as in Proposition~\ref{prop:arith-equiv-fields-give-profinitely-iso-groups} yields that the groups are profinitely commensurable.
  \end{proof}

  Both in Proposition~\ref{prop:arith-equiv-fields-give-profinitely-iso-groups} and in Proposition~\ref{prop:specifiedbijection} we used the group \(\mathbf{G} = \mathbf{SL_3}\) for simplicity but any other higher rank \(\mathbb{Q}\)-isotropic group \(\mathbf{G}\) would work equally fine.

  \subsection{Profiniteness of group invariants} \label{subsection:profiniteinvariants}  We construct an example of two profinitely isomorphic \(S\)-arithmetic groups with the same underlying \(\Q\)-group but different surrounding Lie groups.
  
    \begin{proof}[Proof of Proposition~\ref{prop:ramified-primes-in-s}]
    The fields \(k=\Q(\sqrt[8]{97})\) and \(l = \Q(\sqrt[8]{16\cdot97})\) are arithmetically equivalent by our discussion in Example~\ref{example:arith-equiv-fields}.  The common discriminant of \(k\) and \(l\) equals \(-2^{10} 97^7\), thus \(2\) and \(97\) are the only ramified primes.  By Proposition~\ref{prop:arith-equiv-fields-give-profinitely-iso-groups}, they define a pair of profinitely isomorphic \(S\)-arithmetic groups \(\mathbf{SL_3}(\mathcal{O}_{k,S})\) and \(\mathbf{SL_3}(\mathcal{O}_{l,T})\).  According to Perlis~\cite{Perlis:OnTheEquation}*{p.\,351}, the prime \(2\) decomposes into four different places both in \(k\) and \(l\) with ramification indices \(1,1,2,4\) and \(2,2,2,2\), respectively.  The prime \(97\) is totally ramified in \(k\) and \(l\).  Let \(k_{97}\) and \(l_{97}\) be the completions of \(k\) and \(l\) with respect to the unique place lying over \(97\).  Then \(\mathbf{SL_3}(\mathcal{O}_{k,S})\) and \(\mathbf{SL_3}(\mathcal{O}_{l,T})\) are lattices in the Lie groups
    \begin{align*}
      (\textup{SL}_3)_S &= \prod_{v \in S} \mathbf{SL_3}(k_v) = \textup{SL}_3(\infty) \times \mathbf{SL_3}(k_{97}) \times \prod_{v \mid 2} \mathbf{SL_3}(k_v) \text{ and}\\
      (\textup{SL}_3)_T &= \prod_{w \in T} \mathbf{SL_3}(l_w) = \textup{SL}_3(\infty) \times \mathbf{SL_3}(l_{97}) \times \prod_{w \mid 2} \mathbf{SL_3}(l_w),
    \end{align*}
 respectively, where \(\textup{SL}_3(\infty) = \mathbf{SL_3}(\R) \times \mathbf{SL_3}(\R) \times \mathbf{SL_3}(\C) \times \mathbf{SL_3}(\C) \times \mathbf{SL_3}(\C)\).  Both groups are products of a real Lie group, a 97-adic Lie group and a 2-adic Lie group.  In \((\textup{SL}_3)_S\) the Lie algebra of the \(2\)-adic factor has simple ideals of dimensions \(8,8,16,32\) while in \((\textup{SL}_3)_T\) the Lie algebra of the \(2\)-adic factor has four simple ideals of dimension \(16\) each.  Thus \((\textup{SL}_3)_S\) and \((\textup{SL}_3)_T\) cannot be isomorphic.
    \end{proof}

    We will now see that examples of this type can be ruled out by assuming that the set \(S\) is unramified.
  
  \begin{proof}[Proof of Proposition~\ref{prop:arith-group-determines-lie-group}]
    If \([k, \mathbf{G}, S]\) and \([l, \mathbf{G}, T]\) are profinitely commensurable, then \(k\) and \(l\) are arithmetically equivalent by Theorem~\ref{thm:profcomm-implies-arith-equiv}.  By Theorem~\ref{thm:consequences-arith-equiv}\,\eqref{item:same-signatures}, \(k\) and \(l\) have the same number of real and complex infinite places.  By Theorem~\ref{thm:profcomm-implies-similar-s}, there are residue degree preserving bijections \(\varphi_p \colon S_p \rightarrow T_p\) for all unramified primes \(p\).  As we already discussed in the proof of Proposition~\ref{prop:arith-equiv-fields-give-profinitely-iso-groups}, we thus obtain field isomorphisms \(k_v \cong l_{\varphi_p(v)}\) for all \(v \in S_p\) with unramified~\(p\).  Since by assumption \(S_p = T_p = \emptyset\) whenever \(p\) is ramified, we thus obtain
    \[ G_S = \prod_{v \mid \infty} \mathbf{G}(k_v) \times \prod_p \prod_{v \in S_p} \mathbf{G}(k_v) \cong \prod_{w \mid \infty} \mathbf{G}(l_w) \times \prod_p \prod_{w \in T_p} \mathbf{G}(l_w) = G_T. \qedhere \]
  \end{proof}

  We now prove Theorem~\ref{thm:l2bettiprofinite} which gives a consequence of these observations for the \(\ell^2\)-cohomology of \(S\)-arithmetic groups.

  \begin{proof}[Proof of Theorem~\ref{thm:l2bettiprofinite}]
    Kyed--Petersen--Vaes \cite{Kyed-Petersen-Vaes:LocallyCompact}*{Theorem~B} showed that the \emph{\(\ell^2\)-Betti numbers} \(b^{(2)}_n(\Gamma)\) of a lattice \(\Gamma \le G\) in a locally compact group \(G\) with Haar measure \(\mu\) are given by \(b^{(2)}_n(\Gamma) = b^{(2)}_n(G, \mu) \cdot \mu(\Gamma \backslash G)\).  Here \(b^{(2)}_n(G, \mu)\) is the \emph{\(\ell^2\)-Betti number of the locally compact group \(G\)} as defined by Petersen~\cite{Petersen:PhD} and \(\mu(\Gamma \backslash G) > 0\) is the induced \(G\)-invariant measure on the quotient space.  The assertion of the theorem is therefore immediate from Proposition~\ref{prop:arith-group-determines-lie-group}.
\end{proof}

  Here are some explanations on the example below Theorem~\ref{thm:l2bettiprofinite} \label{page:localfields}.  To begin with, one can establish that the polynomials in~\eqref{equation:mantilla} define arithmetically equivalent number fields by means of Perlis' criterion~\cite{Perlis:Remark} which states that \(k\) and \(l\) of prime degree \(p\) are arithmetically equivalent if and only if \([k\!\cdot\!l : \Q] < p^2\).  The arithmetic equivalence of \(k\) and \(l\) already implies that \(k \otimes_\Q \Q_p \cong_{\Q_p} l \otimes_\Q \Q_p\) for all unramified \(p\).  To see that \(k\) and \(l\) are adelically equivalent, it thus remains to analyze the completions at ramified primes.  Since the common discriminant of \(k\) and \(l\) is \(2^6\,13^4\,191^2\), we only need to compare \(k \otimes_\Q \Q_p\) to \(l \otimes_\Q \Q_p\) for \(p = 2, 13, 191\).  With the \emph{Database of Local Fields} due to Jones--Roberts~\cite{Jones-Roberts:Database}, it is conveniently verified that these \(\Q_p\)-algebras are likewise isomorphic.  Interestingly, the two fields \(k\) and \(l\) have non-isomorphic integral trace forms as can be seen by the methods of~\cite{Mantilla:TraceForms}.  Of course this implies in particular that they are not isomorphic.

  Along the lines of the proof of Proposition~\ref{prop:arith-equiv-fields-give-profinitely-iso-groups}, we see that the groups \(\mathbf{Spin(3,2)}(\mathcal{O}_{k, S})\) and \(\mathbf{Spin(3,2)}(\mathcal{O}_{l, T})\) are profinitely commensurable, and in fact profinitely isomorphic.  The assumption that \(S\) contains no places over ramified primes effects that the full algebra \(k \otimes_\Q \Q_p\) is contained in \(\mathbb{A}_{k,S}\) for \(p = 2, 13, 191\) and similarly for \(l\) and \(T\) so that \(\prod_{v \notin S} \mathbf{Spin(3,2)}(\mathcal{O}_v) \cong \prod_{w \notin T} \mathbf{Spin(3,2)}(\mathcal{O}_w)\) by the adelic equivalence of \(k\) and \(l\).

  Finally, we check for which \(n\) the \(\ell^2\)-Betti number \(b^{(2)}_n((\textup{Spin}(3,2))_S)\) is nonzero.  For this question one can clearly suppress the Haar measures from the notation.  The real semisimple Lie group
  \[ G = \prod_{v \mid \infty}  \mathbf{Spin(3,2)}(\R) = \left(\mathbf{Spin(3,2)}(\R)\right)^7 \]
  has maximal compact subgroup \(K=(\textup{Spin}(3) \times \textup{Spin}(2))^7\) with Lie algebras \(\mathfrak{g}\) and \(\mathfrak{k}\) for which we have \(\rank_\C \,\mathfrak{g} \otimes_\R \C = \rank_\C \,\mathfrak{k} \otimes_\R \C\).  This shows that \(G\) has discrete series representations from which it follows that \(b^{(2)}_n(G) > 0\) if and only if \(n = 21\).  In our terms, this can be concluded from~\cite{Petersen-Valette:Plancherel}*{Corollary~11} though the calculation is due to Borel~\cite{Borel:NegativelyCurved}.  The degree \(n = 21\) is precisely the middle dimension of the associated symmetric space.  For a finite place \(v\) of \(k\), the totally disconnected locally compact group \(\mathbf{Spin(3,2)}(k_v)\) can only have a positive \(\ell^2\)-Betti number in degree two which is the \(k_v\)-rank of \(\mathbf{Spin(3,2)}\)~\cite{Petersen-Valette:Plancherel}*{Corollary~13}.  In fact, in this case we have \(b^{(2)}_2(\mathbf{Spin(3,2)}(k_v)) = \nu(\textup{St})\) where \(\nu\) is the Plancherel measure on the unitary dual \(\widehat{G}\) and \(\textup{St}\) is the \emph{Steinberg module}.  The latter is a discrete series representation and those representations form precisely the atoms of the Plancherel measure so that indeed \(b^{(2)}_2(\mathbf{Spin(3,2)}(k_v)) > 0\).  Petersen's K\"unneth formulas~\cite{Petersen:PhD}*{Theorems~6.5 and~6.7} for products of locally compact groups thus show that all \(\ell^2\)-Betti numbers of \((\textup{Spin}(3,2))_S\) vanish, except in degree \(n = 21 + 2(|S|-7)\) when
  \[ b^{(2)}_{7 + 2|S|} ((\textup{Spin}(3,2))_S) = b^{(2)}_{21}(G) \prod_{\substack{v \in S\\ v \nmid \infty}} b^{(2)}_2(\mathbf{Spin(3,2)}(k_v)) > 0. \]
  We remark that, strictly speaking, \cite{Petersen-Valette:Plancherel}*{Corollary~13} contains the assumption that the residue field of \(k_v\) should be large (of order \(> \frac{1724^2}{25}\)).  But A.\,Valette informs us that it should be possible to drop this assumption by transferring methods of Borel--Wallach from finite-dimensional to infinite-dimensional representations although this has not been written up yet.

  We conclude easily from Theorem~\ref{thm:l2bettiprofinite} that the Euler characteristic has the same sign for all the profinitely commensurable groups it deals with.

  \begin{proof}[Proof of Corollary~\ref{cor:signofeuler}.]
    Just as for ordinary Betti numbers, we have the alternating sum formula \(\chi(\Gamma) = \sum_{n \ge 0} (-1)^n b^{(2)}_n(\Gamma)\), see \cite{Lueck:L2-Invariants}*{Theorem~1.35.(2), p.\,37} or \cite{Kammeyer:IntroL2}*{Theorem~3.19}.  But as we discussed above, \(S\)-arithmetic subgroups of simple algebraic groups have a nonzero \(\ell^2\)-Betti number in at most one degree.  Theorem~\ref{thm:l2bettiprofinite} says that this degree is the same for the groups under consideration.
  \end{proof}

  Finally, we explain why we need no assumptions on \(S\) and \(T\) in Theorem~\ref{thm:l2bettiprofinite} and Corollary~\ref{cor:signofeuler} if \(\mathbf{G}\) splits over \(k\) and \(l\).  By Theorem~\ref{thm:profcomm-implies-arith-equiv} and Theorem~\ref{thm:consequences-arith-equiv}\,\eqref{item:same-signatures}, the real and complex factors in the Lie group products \(G_S\) and \(G_T\) are isomorphic.  The \(p\)-adic factors in \(G_S\) and \(G_T\) may differ, as we saw in Proposition~\ref{prop:ramified-primes-in-s}, but their number is the same.  Indeed, for each rational prime \(p\) we know that the number of places over \(p\) is equal in \(k\) and \(l\) by Theorem~\ref{thm:profcomm-implies-arith-equiv} and Theorem~\ref{thm:characterizations-arith-equiv}\,\eqref{item:all-decomp-types-equal}.  Also, the number of places in \(k\) above \(p\) and outside \(S\) is equal to the number of places in \(l\) above \(p\) and outside \(T\) because this number equals the number of simple ideals in \(\mathfrak{g}_p\).  Put together, this implies \(|S_p| = |T_p|\) for all \(p\), thus \(|S| = |T|\).  Since we assume that \(\mathbf{G}\) splits over \(k\) and \(l\), we have \(\rank_{k_v} \mathbf{G} = \rank_{l_w} \mathbf{G} = \rank_\C \mathbf{G} = r\) for all places \(v\) in \(k\) and \(w\) in \(l\).  Thus by the K\"unneth formula, the only nonzero \(\ell^2\)-Betti number of the totally disconnected factor in \(G_S\) and \(G_T\) sits in the same degree \(r(|S|-|\{ v \mid \infty \}|)\).  Again by the K\"unneth formula, \(b^{(2)}_n(G_S) = 0\) if and only if \(b^{(2)}_n(G_T) = 0\) and this implies the result.  Note that instead of requiring that \(\mathbf{G}\) be split over \(k\) and \(l\), it would actually have sufficed to assume that \(\mathbf{G}\) splits over all local fields corresponding to the finite places in \(S\) and \(T\).

\begin{bibdiv}[References]
  \begin{biblist}
    
\bib{Aka:Arithmetic}{article}{
   author={Aka, M.},
   title={Arithmetic groups with isomorphic finite quotients},
   journal={J. Algebra},
   volume={352},
   date={2012},
   pages={322--340},
   issn={0021-8693},
   review={\MRref{2862189}{}},
}

\bib{Aka:PropertyT}{article}{
   author={Aka, M.},
   title={Profinite completions and Kazhdan's property (T)},
   journal={Groups Geom. Dyn.},
   volume={6},
   date={2012},
   number={2},
   pages={221--229},
   issn={1661-7207},
   review={\MRref{2914858}{}},
}

\bib{Borel:NegativelyCurved}{article}{
   author={Borel, A.},
   title={The $L^2$-cohomology of negatively curved Riemannian symmetric
   spaces},
   journal={Ann. Acad. Sci. Fenn. Ser. A I Math.},
   volume={10},
   date={1985},
   pages={95--105},
   issn={0066-1953},
   review={\MRref{802471}{}},
}

\bib{Borel-Serre:Immeubles}{article}{
   author={Borel, A.},
   author={Serre, J.-P.},
   title={Cohomologie d'immeubles et de groupes $S$-arithm\'etiques},
   language={French},
   journal={Topology},
   volume={15},
   date={1976},
   number={3},
   pages={211--232},
   issn={0040-9383},
   review={\MRref{0447474}{}},
}

\bib{Bosma-deSmit:OnArithEquiv}{article}{
   author={Bosma, W.},
   author={de Smit, B.},
   title={On arithmetically equivalent number fields of small degree},
   conference={
      title={Algorithmic number theory},
      address={Sydney},
      date={2002},
   },
   book={
      series={Lecture Notes in Comput. Sci.},
      volume={2369},
      publisher={Springer, Berlin},
   },
   date={2002},
   pages={67--79},
   review={\MRref{2041074}{}},
 }

 \bib{Bridson-et-al:Fuchsian}{article}{
   author={Bridson, M. R.},
   author={Conder, M. D. E.},
   author={Reid, A. W.},
   title={Determining Fuchsian groups by their finite quotients},
   journal={Israel J. Math.},
   volume={214},
   date={2016},
   number={1},
   pages={1--41},
   issn={0021-2172},
   review={\MRref{3540604}{}},
   doi={10.1007/s11856-016-1341-6},
}

\bib{Jones-Roberts:Database}{article}{
   author={Jones, J. W.},
   author={Roberts, D. P.},
   title={A database of local fields},
   journal={J. Symbolic Comput.},
   volume={41},
   date={2006},
   number={1},
   pages={80--97},
   issn={0747-7171},
   review={\MRref{2194887}{}},
}

\bib{Kammeyer:IntroL2}{book}{
   author={Kammeyer, H.},
   title={Introduction to $\ell^2$-invariants},
   series={Lecture Notes in Mathematics},
   volume={2247},
   publisher={Springer, Cham},
   date={2019},
   pages={viii+181},
   isbn={978-3-030-28296-7},
   isbn={978-3-030-28297-4},
   review={\MRref{3971279}{}},
   doi={10.1007/978-3-030-28297-4},
 }

  \bib{Kammeyer-et-al:profinite-invariants}{article}{
    author={Kammeyer, H.},
    author={Kionke, S.},
    author={Raimbault, J.},
  author={Sauer, R.},
  title={Profinite invariants of arithmetic groups},
  note={to appear in Forum of Mathematics, Sigma},
  review={\arXiv{1901.01227}},
}

 \bib{Kammeyer-Sauer:spinor}{article}{
  author={Kammeyer, H.},
  author={Sauer, R.},
  title={\(S\)-arithmetic spinor groups with the same finite quotients and distinct \(\ell^2\)-cohomology},
  note={to appear in Groups, Geometry, and Dynamics},
  review={\arXiv{1804.10604}},
}

\bib{Klingen:ArithmeticalSimilarities}{book}{
   author={Klingen, N.},
   title={Arithmetical similarities},
   series={Oxford Mathematical Monographs},
   note={Prime decomposition and finite group theory;
   Oxford Science Publications},
   publisher={The Clarendon Press, Oxford University Press, New York},
   date={1998},
   pages={x+275},
   isbn={0-19-853598-8},
   review={\MRref{1638821}{}},
}

\bib{Komatsu:OnTheAdeleRings}{article}{
   author={Komatsu, K.},
   title={On the adele rings and zeta-functions of algebraic number fields},
   journal={Kodai Math. J.},
   volume={1},
   date={1978},
   number={3},
   pages={394--400},
   issn={0386-5991},
   review={\MRref{517831}{}},
}

\bib{Kyed-Petersen-Vaes:LocallyCompact}{article}{
   author={Kyed, D.},
   author={Petersen, H.\,D.},
   author={Vaes, Stefaan},
   title={$L^2$-Betti numbers of locally compact groups and their cross
   section equivalence relations},
   journal={Trans. Amer. Math. Soc.},
   volume={367},
   date={2015},
   number={7},
   pages={4917--4956},
   issn={0002-9947},
   review={\MRref{3335405}{}},
}

\bib{Lang:AlgebraicNumberTheory}{book}{
   author={Lang, S.},
   title={Algebraic number theory},
   series={Graduate Texts in Mathematics},
   volume={110},
   edition={2},
   publisher={Springer-Verlag, New York},
   date={1994},
   pages={xiv+357},
   isbn={0-387-94225-4},
   review={\MRref{1282723}{}},
}

\bib{Lueck:L2-Invariants}{book}{
   author={L\"uck, W.},
   title={$L^2$-invariants: theory and applications to geometry and
   $K$-theory},
   series={Ergebnisse der Mathematik und ihrer Grenzgebiete. 3. Folge. A
   Series of Modern Surveys in Mathematics},
   volume={44},
   publisher={Springer-Verlag, Berlin},
   date={2002},
   pages={xvi+595},
   isbn={3-540-43566-2},
   review={\MRref{1926649}{}},
}

\bib{Mantilla:TraceForms}{article}{
   author={Mantilla Soler, G.},
   title={On number fields with equivalent integral trace forms},
   journal={Int. J. Number Theory},
   volume={8},
   date={2012},
   number={7},
   pages={1569--1580},
   issn={1793-0421},
   review={\MRref{2968942}{}},
}

\bib{Margulis:DiscreteSubgroups}{book}{
   author={Margulis, G. A.},
   title={Discrete subgroups of semisimple Lie groups},
   series={Ergebnisse der Mathematik und ihrer Grenzgebiete (3) [Results in
   Mathematics and Related Areas (3)]},
   volume={17},
   publisher={Springer-Verlag, Berlin},
   date={1991},
   pages={x+388},
   isbn={3-540-12179-X},
   review={\MRref{1090825}{}},
}

\bib{Neukirch:Zahlentheorie}{book}{
   author={Neukirch, J.},
   title={Algebraische Zahlentheorie},
   language={German},
   publisher={Springer-Verlag, Berlin},
   date={1992},
   pages={xiii+595},
   isbn={3-540-54273-6},
   review={\MRref{3444843}{}},
}

\bib{Nikolov-Segal:strong-completeness}{article}{
   author={Nikolov, N.},
   author={Segal, D.},
   title={On finitely generated profinite groups. I. Strong completeness and
   uniform bounds},
   journal={Ann. of Math. (2)},
   volume={165},
   date={2007},
   number={1},
   pages={171--238},
   issn={0003-486X},
   review={\MRref{2276769}{}},
   doi={10.4007/annals.2007.165.171},
 }
 
\bib{Perlis:Remark}{article}{
   author={Perlis, R.},
   title={A remark about zeta functions of number fields of prime degree},
   journal={J. Reine Angew. Math.},
   volume={293/294},
   date={1977},
   pages={435--436},
   issn={0075-4102},
   review={\MRref{0447189}{}},
}

\bib{Perlis:OnTheEquation}{article}{
   author={Perlis, R.},
   title={On the equation $\zeta _{K}(s)=\zeta _{K'}(s)$},
   journal={J. Number Theory},
   volume={9},
   date={1977},
   number={3},
   pages={342--360},
   issn={0022-314X},
   review={\MRref{0447188}{}},
}

\bib{Petersen:PhD}{book}{
  author={Petersen, H.\,D.},
  title={L2-Betti numbers of locally compact groups},
  year={2012},
  isbn={978-87-7078-993-6},
  publisher={PhD thesis, Department of Mathematical Sciences, Faculty of Science, University of Copenhagen},
  note={\url{http://www.math.ku.dk/noter/filer/phd13hdp.pdf}},
}

\bib{Petersen-Valette:Plancherel}{article}{
   author={Petersen, H.\,D.},
   author={Valette, A.},
   title={$L^2$-Betti numbers and Plancherel measure},
   journal={J. Funct. Anal.},
   volume={266},
   date={2014},
   number={5},
   pages={3156--3169},
   issn={0022-1236},
   review={\MRref{3158720}{}},
}

\bib{Platonov-Rapinchuk:AlgebraicGroups}{book}{
   author={Platonov, V.},
   author={Rapinchuk, A.},
   title={Algebraic groups and number theory},
   series={Pure and Applied Mathematics},
   volume={139},
   note={Translated from the 1991 Russian original by Rachel Rowen},
   publisher={Academic Press, Inc., Boston, MA},
   date={1994},
   pages={xii+614},
   isbn={0-12-558180-7},
   review={\MRref{1278263}{}},
}

\bib{Prasad-Rapinchuk:Developments}{article}{
  author={Prasad, G.},
  author={Rapinchuk, A. S.},
  title={Developments on the congruence subgroup problem after the work of Bass, Milnor and Serre},
  note={e-print \arXiv{0809.1622}},
  year={2008},
}

\bib{Raghunathan:CSP}{article}{
   author={Raghunathan, M. S.},
   title={The congruence subgroup problem},
   journal={Proc. Indian Acad. Sci. Math. Sci.},
   volume={114},
   date={2004},
   number={4},
   pages={299--308},
   issn={0253-4142},
   review={\MRref{2067695}{}},
}

\bib{Reid:profinite-properties}{article}{
   author={Reid, A. W.},
   title={Profinite properties of discrete groups},
   conference={
      title={Groups St Andrews 2013},
   },
   book={
      series={London Math. Soc. Lecture Note Ser.},
      volume={422},
      publisher={Cambridge Univ. Press, Cambridge},
   },
   date={2015},
   pages={73--104},
   review={\MRref{3445488}{}},
 }
 
\bib{Ribes-Zalesskii:ProfiniteGroups}{book}{
   author={Ribes, L.},
   author={Zalesskii, P.},
   title={Profinite groups},
   series={Ergebnisse der Mathematik und ihrer Grenzgebiete. 3. Folge.},
   volume={40},
   publisher={Springer-Verlag, Berlin},
   date={2000},
   pages={xiv+435},
   review={\MRref{1775104}{}},
}

\bib{Stucki:Master}{book}{
     author={Stucki, N.},
     title={\(L^2\)-Betti numbers and profinite completions of groups},
     publisher={Master thesis (Karlsruhe Institute of Technology)},
     note={Available for download at \url{http://topology.math.kit.edu/21_696.php}},
     date={2018},
}

\bib{Trinks:Arithmetisch}{article}{
   author={Trinks, W.},
   title={Arithmetisch {\"a}hnliche Zahlk{\"o}rper},
   journal={Diplomarbeit (TH Karlsruhe)},
   note={unpublished},
   date={1969},
}

\end{biblist}
\end{bibdiv}

\end{document}